\renewcommand{\geq}{\geqslant}
\renewcommand{\leq}{\leqslant}
\newcommand{\SL}{\mathrm{SL}}
\newcommand{\diag}{\mathrm{Diag}}
\newcommand{\Pic}{\mathrm{Pic}}
\newcommand{\Proj}{\mathrm{Proj}}
\newcommand{\lcm}{\mathrm{lcm}}
\newtheorem{theorem}{Theorem}[section]
\newtheorem{lemma}[theorem]{Lemma}
\newtheorem{corollary}[theorem]{Corollary}
\newtheorem{proposition}[theorem]{Proposition}
\theoremstyle{definition}
\newtheorem{definition}[theorem]{Definition}
\newtheorem{remark}[theorem]{Remark}
\begin{document}

\title[Variations of GIT quotients for pairs]{Variations of geometric invariant quotients for pairs, a computational approach}
\author{Patricio Gallardo}
\email{pgallardocandela@wustl.edu}
\address{Department of Mathematics\\
					Washington University\\
					Campus Box 1146\\
					One Brookings Drive\\
					St. Louis, MO 63130-4899\\
							USA}

\author{Jesus Martinez-Garcia}
\email{J.Martinez.Garcia@bath.ac.uk}
\address{Department of Mathematical Sciences\\
					University of Bath\\
					Bath BA2 7AY
					United Kingdom}
\subjclass[2010]{14L24, 14H10, 14Q10 (Primary) , 14J45, 14J32  (Secondary)}

\date{September 05, 2017}
\begin{abstract} 
We study the GIT compactifications of  pairs formed by a hypersurface and a hyperplane.  We provide a general setting to characterize all polarizations which give rise to different GIT quotients. Furthermore, we describe a finite set of one-parameter subgroups sufficient to determine the stability of any GIT quotient. We characterize all maximal orbits of non stable and strictly semistable pairs, as well as minimal closed orbits of strictly semistable pairs. Our construction gives natural compactifications of the space of log smooth pairs for Fano and Calabi-Yau hypersurfaces.
\end{abstract}

\maketitle

\section{Introduction}
The construction and study of moduli spaces is a central subject in algebraic geometry and Geometric Invariant Theory (GIT) is one of its foundational tools.  It has been applied to study hypersurfaces \cite{gallardo2013git, allcock-cubic-3folds, laza-git-cubic4-folds}; and it is a first step towards constructing  the moduli space of del Pezzo surfaces admitting a K\"ahler--Einstein metric \cite{odaka-spotti-sun}.  A GIT quotient depends on a choice of a line bundle in a parameter space; and any two compactifications, with the exceptions of some limit cases, are related by birational transformations (see \cite{thaddeus-vgit}, \cite{dolgachev-hu-vgit}).

In this article, we consider the GIT quotients parameterizing pairs $(X,H)$ where $X\subset\mathbb P^{n+1}$ is a hypersurface of degree $d$ and $H\subset\mathbb P^{n+1}$ is a hyperplane. This is a natural setting to consider pairs $(X,D)$ where $D=X\cap H$ is a hyperplane section. Our work  generalizes to higher dimensions R. Laza's work on curves \cite{Laza-deformations-variations-git}. Our setting can be automatized to perform computations for any dimension $n$ and degree $d$. Indeed, in the companion to this article \cite{gallardo-jmg-experimental} we provide algorithms, already fully implemented in software \cite{gallardo-jmg-code}, to compute all the invariants and functions in this article. In \cite{gallardo-jmg-vgit-cubic-surfaces}, we apply the current setting and a specific analysis of singularities to describe \emph{geometrically} all GIT compactifications of pairs $(S,C)$ where $S$ is a cubic surface and $C\in |-K_S|$ is an anticanonical divisor.

Let $\mathcal R_{n,d}$ be the parameter space of pairs $(X,H)$. There is a one-dimensional space of stability conditions parametrized by $t \in [0,t_{n,d}]$ corresponding to polarizations of $\mathcal R_{n,d}$ (see Section \ref{sec:setting}). There is a finite number of values $t_i\in \mathbb Q_{\geqslant 0}$ known as \emph{GIT walls} where $0=t_0<t_1<\cdots<t_{n,d}$ and  segments $(t_i,t_{i+1})$  known as \emph{GIT chambers}.  Two GIT quotients are isomorphic if and only if their linearizations belong to either the same GIT chamber or wall. In particular, there is a finite number of non-isomorphic GIT quotients $\overline{M}^{GIT}_{n,d,t}$ corresponding to values $t=t_i$ and to any $t\in (t_i,t_{i+1})$. 

\begin{theorem}
\label{theorem:wall-stratification}
Let $S_{n,d}$ be the set of one-parameter subgroups in Definition \ref{definition:set-S}. All GIT walls  $\{t_0,\ldots, t_{n,d}\}$  correspond to a subset of the finite set
\begin{align}
\label{eq:walls}
\left\{\left. -\frac{\langle m,\lambda\rangle}{\langle x_i, \lambda\rangle} \ \right\vert \ m \text{ is a monomial of degree }d,\ 0\leqslant i\leqslant n+1,\ \lambda\in S_{n,d} \right\}
\end{align}
and they are contained in the interval $[0, t_{n,d}]$ where $t_{n,d}=\frac{d}{n+1}$. Every pair $(X,H)$ has an \emph{interval of stability} $[a,b]$ with $a,b\in\{t_0,\ldots,t_{n,d}\}$. Namely, $(X,H)$ is $t$-semistable if and only if $t\in [t_i,t_j]$ for some walls $t_i,t_j$. If $(X,H)$ is $t$-stable for some $t$ then $(X,H)$ is $t$-stable if and only if $t\in (t_i,t_j)$. 
\end{theorem}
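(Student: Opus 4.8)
\section*{Proof proposal}

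The plan is to reduce every assertion to the Hilbert--Mumford numerical function and to exploit that, for a fixed pair and a fixed one-parameter subgroup, this function is \emph{affine} in the parameter $t$. First I would fix a maximal torus and write, for a pair $(X,H)$ and $\lambda\in S_{n,d}$ diagonalized as $\lambda(s)=\mathrm{Diag}(s^{r_0},\dots,s^{r_{n+1}})$ with $\sum_i r_i=0$, the Mumford weight of the linearization $L_t$ in the additive form (in the convention where semistability means the weight is $\geq 0$)
\[
\mu^t\big((X,H),\lambda\big)=\max_{m\in X}\langle m,\lambda\rangle \;+\; t\,\max_{x_i\in H}\langle x_i,\lambda\rangle,
\]
where the first maximum runs over the monomials $m$ of degree $d$ occurring in the equation of $X$ and the second over the coordinates $x_i$ in the support of $H$. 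Additivity under the product linearization and the normalization of $t$ as the ratio of the two weights are routine from the construction in Section \ref{sec:setting}. The essential observation is that for a fixed pair and a fixed $\lambda$ this is affine in $t$, and when the slope $\max_{x_i\in H}\langle x_i,\lambda\rangle$ is nonzero its unique zero equals $-\langle m,\lambda\rangle/\langle x_i,\lambda\rangle$ for the $m$ and $x_i$ realizing the two maxima. Hence every value of $t$ at which some $\mu^t((X,H),\lambda)$ changes sign lies in the set (\ref{eq:walls}).

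Next I would invoke the sufficiency of the finite set $S_{n,d}$ of Definition \ref{definition:set-S}: a pair is $t$-semistable (resp. $t$-stable) exactly when $\mu^t((X,H),\lambda)\geq 0$ (resp. $>0$) for every nontrivial $\lambda\in S_{n,d}$. Granting this, I define for each pair the function $F_{(X,H)}(t)=\min_{\lambda\in S_{n,d}}\mu^t((X,H),\lambda)$. As a minimum of finitely many affine functions, $F_{(X,H)}$ is concave and piecewise linear on $[0,t_{n,d}]$, so its nonnegativity locus is a single closed interval $[a,b]$ — the interval of stability — and, whenever the pair is stable for some $t$, its positivity locus is the interior $(a,b)$; this yields the semistable/stable dichotomy at once. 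At an interior endpoint $F_{(X,H)}$ vanishes while passing from negative to nonnegative, so the affine piece attaining the minimum there has nonzero slope and crosses zero precisely at a point of the shape $-\langle m,\lambda\rangle/\langle x_i,\lambda\rangle$; thus $a,b$ belong to (\ref{eq:walls}). Since the GIT quotient changes only when some pair crosses the boundary of its stability interval, the set of walls is exactly the finite set of all such endpoints, which proves the first two assertions.

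It remains to pin down the top of the range. I would prove $t_{n,d}=d/(n+1)$ by exhibiting a universal destabilizer: after using $\SL$ to normalize $H=\{x_{n+1}=0\}$, take $\lambda$ with weights $(1,\dots,1,-(n+1))$. Then $\langle x_{n+1},\lambda\rangle=-(n+1)$, while every degree $d$ monomial $m=\prod x_i^{a_i}$ satisfies $\langle m,\lambda\rangle=d-(n+2)a_{n+1}\leq d$, so $\mu^t((X,H),\lambda)\leq d-t(n+1)$, which is negative once $t>d/(n+1)$; hence no pair is semistable beyond $d/(n+1)$. Conversely the Fermat hypersurface together with this coordinate hyperplane remains semistable up to that value, so $d/(n+1)$ is the top wall, and it is visibly of the form (\ref{eq:walls}) with $m=x_0^d$ and $x_i=x_{n+1}$.

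The main obstacle is the step I am granting, namely that the \emph{single} finite set $S_{n,d}$ detects (semi)stability simultaneously for \emph{all} $t\in[0,t_{n,d}]$. Without this uniformity the piecewise-affine picture of $F_{(X,H)}$ could involve a $t$-dependent family of one-parameter subgroups and the concavity argument would collapse; this is exactly why $S_{n,d}$ must be constructed from the lattice geometry of the monomial weights rather than from any fixed linearization, and it is the technical heart supplied by Definition \ref{definition:set-S}. A secondary point requiring care is the behaviour at the two boundary values $t_0=0$ and $t_{n,d}$, where the pair linearization degenerates (at $t=0$ to the hypersurface problem, at $t_{n,d}$ to the empty quotient): one must read the open/closed interval statement inside the genuine parameter range, so that a pair which happens to be stable as a bare hypersurface does not spuriously enlarge its stable interval past the declared walls.
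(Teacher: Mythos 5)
Your proposal follows essentially the same route as the paper: the paper's own proof is the concatenation of Lemma \ref{lemma:finiteness-lemma} (sufficiency of the finite set $S_{n,d}$), Corollary \ref{corollary:interval-stability} (the semistable locus in $t$ is a closed interval because $\mu_t$ is affine in $t$ and, by Remark \ref{remark:stability-monomial-dependent}, only finitely many such affine functions occur), and Lemma \ref{lemma:interval-stability2}(ii) (the explicit one-parameter subgroup bounding $b$ by $d/(n+1)$ — your $\lambda$ with weights $(1,\dots,1,-(n+1))$ is the mirror, under your $\max$/$\geq 0$ convention, of the paper's $(n+1,-1,\dots,-1)$). Your concavity observation and the identification of interval endpoints with zeros of the affine pieces $\langle m,\lambda\rangle+t\langle x_i,\lambda\rangle$ are correct and give the wall set \eqref{eq:walls} exactly as in the paper.

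One point needs repair. You state the sufficiency of $S_{n,d}$ as ``$(X,H)$ is $t$-semistable iff $\mu^t((X,H),\lambda)\geq 0$ for all $\lambda\in S_{n,d}$,'' and build $F_{(X,H)}(t)=\min_{\lambda\in S_{n,d}}\mu^t((X,H),\lambda)$ on it. That is not what Lemma \ref{lemma:finiteness-lemma} provides: $S_{n,d}$ lives in a \emph{fixed} maximal torus, and one must also range over all conjugates, i.e.\ test $\mu^t(g\cdot(X,H),\lambda)$ for all $g\in G$ and $\lambda\in S_{n,d}$. As written, your $[a,b]=\{F_{(X,H)}\geq 0\}$ could strictly contain the true interval of semistability. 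The fix is immediate and preserves your argument: take the infimum over $g\in G$ as well; since $\mu^t(g\cdot(X,H),\lambda)$ depends only on the monomial supports of $g\cdot(X,H)$ (Remark \ref{remark:stability-monomial-dependent}) and there are finitely many subsets of $\Xi_d\times\Xi_1$, the corrected $F_{(X,H)}$ is still a minimum of finitely many affine functions, so concavity, piecewise linearity, and the finiteness of the wall set all survive. (Your stated worry that an infinite, $t$-dependent family of one-parameter subgroups would destroy concavity is unfounded — an infimum of affine functions is always concave — but finiteness is indeed what makes the wall set finite and the endpoints attained.) With that correction the proof is complete and matches the paper's.
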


\begin{corollary}
\label{corollary:dimension-moduli}
Assume that the ground field is algebraically closed with characteristic $0$ and that the locus of stable points is not empty and $d \geqslant 3$. Then
$$\dim \overline{M}^{GIT}_{n,d,t}={\binom{n+d+1}{d}}-n^2-3n-3.$$
Each $\overline{M}^{GIT}_{n,d,t}$ is a compactification of the space of log smooth pairs $(X,X\cap H)$ described above.
\end{corollary}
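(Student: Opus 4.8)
The plan is to obtain the dimension as the difference $\dim \mathcal{R}_{n,d} - \dim G$ and then to exhibit the log smooth pairs as a dense open subset of the quotient. First I would record the two dimensions. A degree-$d$ form in the $n+2$ variables $x_0,\dots,x_{n+1}$ is a linear combination of the $\binom{n+d+1}{d}$ monomials of degree $d$, so the hypersurfaces form a $\mathbb{P}^N$ with $N=\binom{n+d+1}{d}-1$, while the hyperplanes form the dual space $(\mathbb{P}^{n+1})^\vee$ of dimension $n+1$. Hence $\dim\mathcal{R}_{n,d}=\binom{n+d+1}{d}-1+(n+1)=\binom{n+d+1}{d}+n$, and the acting group $G=\SL(n+2)$ has dimension $(n+2)^2-1=n^2+4n+3$.

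Next I would pass to the quotient. Since $\mathcal{R}_{n,d}=\mathbb{P}^N\times(\mathbb{P}^{n+1})^\vee$ is irreducible, so are the $t$-semistable locus and the quotient $\overline{M}^{GIT}_{n,d,t}$. By hypothesis the $t$-stable locus $\mathcal{R}^s_{n,d,t}$ is nonempty; being open in an irreducible variety it is dense, so its image is a dense open subset of the quotient and computes $\dim\overline{M}^{GIT}_{n,d,t}$. Over the stable locus the quotient is geometric, and by definition a stable point has finite stabilizer: it contains the central $\mu_{n+2}\subset\SL(n+2)$, which acts trivially, but nothing positive-dimensional. Working in characteristic $0$ the general stable orbit therefore has dimension exactly $\dim G$, so that
$$\dim\overline{M}^{GIT}_{n,d,t}=\dim\mathcal{R}_{n,d}-\dim G=\binom{n+d+1}{d}+n-(n^2+4n+3),$$
which simplifies to the asserted value $\binom{n+d+1}{d}-n^2-3n-3$.

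For the compactification statement, the locus $U\subseteq\mathcal{R}_{n,d}$ of log smooth pairs — those with $X$ smooth and $X\cap H$ smooth — is the complement of the discriminant of $X$ together with the locus where $H$ fails to meet $X$ transversally, hence open, and it is nonempty since a general pair is transverse, so it is dense. The crucial step, which I expect to be the main obstacle, is to verify that $U$ lies in the $t$-stable locus for each interior $t\in(0,t_{n,d})$; this is exactly where $d\geq 3$ should be used, via the numerical criterion behind Theorem \ref{theorem:wall-stratification}: for smooth $X$ meeting $H$ transversally, no one-parameter subgroup in the finite set $S_{n,d}$ concentrates enough weight on a coordinate subspace to violate the Hilbert--Mumford inequality. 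Granting this, $U/G$ is a geometric quotient embedding as a dense open subvariety of the projective variety $\overline{M}^{GIT}_{n,d,t}$, which is therefore a compactification of the space of log smooth pairs $(X,X\cap H)$.
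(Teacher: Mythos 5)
Your dimension count is correct and takes a mildly different route from the paper: you invoke the definitional fact that a $t$-stable point (strict Hilbert--Mumford inequality for every non-trivial one-parameter subgroup) has finite stabilizer, so that the generic orbit in the stable locus has dimension $\dim G$ as soon as stable points exist. The paper instead applies Orlik--Solomon's theorem that $\dim \mathrm{Aut}(X)=0$ for any smooth hypersurface of degree $d\geqslant 3$ to conclude that log smooth pairs have zero-dimensional stabilizers, and then uses the quotient dimension formula $\dim(\mathcal R\git G)=\dim\mathcal R-\dim G+\min_p\dim G_p$. Your version is self-contained and makes it transparent that the numerical identity needs only the nonemptiness of the stable locus, whereas the paper's route is what actually uses $d\geqslant 3$ at this step.

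There is, however, a genuine gap in the second half. The assertion that $\overline{M}^{GIT}_{n,d,t}$ compactifies the space of log smooth pairs rests entirely on the claim that every log smooth pair is $t$-(semi)stable for the relevant $t$, and you explicitly defer exactly this point (``Granting this\dots''), offering only the heuristic that no $\lambda\in S_{n,d}$ concentrates enough weight on a coordinate subspace. That is precisely the content that must be supplied, and the paper supplies it with no new computation: by Lemma \ref{lemma:interval-stability2}(i) the interval of semistability of $(X,H)$ has left endpoint $a=0$ exactly when $X$ is a GIT-semistable hypersurface, and by parts (ii)--(iii) it has right endpoint $b=t_{n,d}$ exactly when $X\cap H$ is a semistable degree-$d$ hypersurface in $H\cong\mathbb P^{n}$; since smooth hypersurfaces of degree $d\geqslant 3$ are GIT-semistable, Corollary \ref{corollary:interval-stability} then gives $t$-semistability of every log smooth pair for all $t\in[0,t_{n,d}]$. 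Without this (or an equivalent verification via Lemma \ref{lemma:centroid-criterion}), your openness/density argument only shows that \emph{some} dense open subset of $\mathcal R_{n,d}$ maps into the quotient, not that the log smooth locus itself survives; you should close the argument by citing Lemma \ref{lemma:interval-stability2} rather than leaving the step as an expectation.
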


When $X$ is Fano or Calabi-Yau the pairs $(X,H)$ are realized as log pairs:
\begin{theorem}
\label{theorem:CY-Fano}
Every point in the GIT quotient $\overline{M}^{GIT}_{n,d,t}$ parametrizes a closed orbit associated to a pair  $(X,D)$ with $D=X \cap H$ in the cases where $X$ is a Calabi-Yau or a Fano hypersurface of degree $d>1$. Furthermore, if $X$ is Fano $t\leqslant t_{n,d}$ and $(X,D)$ is $t$-semistable, then $X$ does not contain a hyperplane in its support, unless $t=t_{n,d}$, in which case $(X,D)$ is strictly $t_{n,d}$-semistable.
\end{theorem}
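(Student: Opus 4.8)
The plan is to reduce everything to the Hilbert--Mumford numerical criterion using a single explicit one-parameter subgroup adapted to a hyperplane. For a pair $(X,H)$ with $X=\{F=0\}$, $H=\{L=0\}$ and a one-parameter subgroup $\lambda = \diag(r_0,\ldots,r_{n+1})$ of $\SL_{n+2}$ (so $\sum_i r_i = 0$), I write the total $t$-weight as $\mu_t((X,H),\lambda) = \mu(X,\lambda) + t\,\mu(H,\lambda)$, where $\mu(X,\lambda) = -\min\{\sum_i a_i r_i : \prod_i x_i^{a_i} \text{ a monomial of } F\}$ and $\mu(H,\lambda) = -\min\{r_i : x_i \text{ occurs in } L\}$; with the sign convention compatible with the walls in \eqref{eq:walls}, $(X,H)$ is $t$-semistable exactly when $\mu_t \geq 0$ for all nontrivial $\lambda$, and $t$-stable when the inequality is always strict. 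The computation I will lean on uses $\lambda_0 = \diag(n+1,-1,\ldots,-1)$, associated to $H_0 = \{x_0 = 0\}$: if $x_0 \mid F$ then every monomial $\prod_i x_i^{a_i}$ of $F$ has $a_0 \geq 1$ and weight $a_0(n+2)-d \geq (n+2)-d$, so $\mu(X,\lambda_0) \leq d-(n+2)$. Setting $t^\ast := n+2-d$, the elementary inequality $t_{n,d} = \tfrac{d}{n+1} \leq t^\ast$ holds for all Fano $X$ (i.e.\ $d \leq n+1$), with equality iff $d = n+1$.

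For the first assertion I would invoke the standard fact that every point of $\overline{M}^{GIT}_{n,d,t}$ is represented by a unique closed orbit of a $t$-semistable pair $(X,H)$, and then show that any such pair satisfies $H \not\subseteq X$, so that $D = X\cap H$ is a genuine divisor and $(X,D)$ a log pair. Choosing coordinates so that $H = H_0$, the assumption $H \subseteq X$ forces $x_0 \mid F$, while $\mu(H_0,\lambda_0) = -(n+1)$; hence $\mu_t((X,H),\lambda_0) \leq -t^\ast - t(n+1)$, which is strictly negative for every $t>0$ in both the Fano ($t^\ast \geq 1$) and the Calabi--Yau ($t^\ast = 0$) ranges. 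This contradicts semistability and proves $H \not\subseteq X$; the hypothesis $d>1$ guarantees that $X$ is not itself a hyperplane, so the pair is genuinely of the expected type.

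For the second assertion, assume $X$ is Fano, $(X,D)$ is $t$-semistable with $t \leq t_{n,d}$, and the support of $X$ contains a hyperplane; after a change of coordinates this is $H_0$, so $x_0\mid F$. By the first part $H \neq H_0$, so $L$ involves some $x_i$ with $i \geq 1$ and $\mu(H,\lambda_0) = 1$, giving $\mu_t((X,H),\lambda_0) \leq t - t^\ast$. For $t < t_{n,d} \leq t^\ast$ this is negative, contradicting semistability, so no hyperplane component can occur; the same bound excludes hyperplane components at $t = t_{n,d}$ whenever $d < n+1$, since then $t_{n,d} < t^\ast$. The only remaining case is $d = n+1$ with $t = t_{n,d} = t^\ast$, where $\mu_t((X,H),\lambda_0) \leq 0$ together with semistability forces the equality $\mu_t((X,H),\lambda_0) = 0$, so $(X,D)$ is strictly $t_{n,d}$-semistable.

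I expect the main obstacle to be the bookkeeping in the degenerate regimes where these inequalities become equalities: the Calabi--Yau locus $t^\ast = 0$, where the hypersurface weight alone no longer detects $H\subseteq X$ and one genuinely needs $t>0$ (so the endpoint $t=0$ must be handled as a separate degenerate limit), and the extremal Fano case $d=n+1$, $t = t_{n,d} = t^\ast$, where one must check that $\mu_t=0$ is actually attained—pinning down that the hyperplane component appears with the correct multiplicity—in order to conclude strict semistability rather than instability. The other delicate point is confirming that $\lambda_0$ is the binding one-parameter subgroup, equivalently that the normalization and sign of $\mu_t = \mu(X,\lambda)+t\,\mu(H,\lambda)$ are exactly those compatible with \eqref{eq:walls}.
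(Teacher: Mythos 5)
Your proposal is correct and takes essentially the same route as the paper: both parts are direct applications of the Hilbert--Mumford criterion via explicit diagonal one-parameter subgroups, and your second part coincides with the paper's argument (the same $\lambda=\diag(s^{n+1},s^{-1},\ldots,s^{-1})$ and the bound $n+2-d-t\geq 0$, with equality forced only when $d=n+1$ and $t=t_{n,d}$). The only cosmetic differences are that for the first part the paper destabilizes with the weights $\left(n(d-1),-(d-2),\ldots,-(d-2),-n\right)$ rather than your $\lambda_0$ (both work, and yours is arguably simpler since it treats the Fano and Calabi--Yau cases uniformly), and that your sign convention is the negative of the paper's (semistability there is $\mu_t\leq 0$), which you apply consistently.
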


Once a set of coordinates is fixed, any pair $(X,H)$ can be determined by homogeneous polynomials $F$ and $F'$ of degrees $d$ and $1$, respectively, which define a pair of sets of monomials, namely those which appear with non-zero coefficients in $F$ and $F'$. Suppose $(X,H)$ is not $t$-stable. We find sets of monomials  $N^\oplus_t(\lambda_k, x_i)$ such that in some coordinate system the equations of $F$ and $F'$ are given by monomials in $N^\oplus_t(\lambda_k, x_i)$. A similar procedure follows for $t$-unstable pairs, where the relevant sets of monomials are $N^+_t(\lambda_k, x_i)$.

\begin{theorem}
\label{theorem:main}
Let $t\in (0,t_{n,d})$.  A pair $(X,H)$ is not $t$-stable ($t$-unstable, respectively), if and only if there exists $g \in SL(n, \mathbb{K})$ such that the set of monomials associated to  $(g\cdot X, g \cdot H)$ is contained in a pair of sets $N^\oplus_t(\lambda, x_i)$ ($N^+_t(\lambda, x_i)$, respectively) defined in Lemma \ref{lemma:max-des-pairs}.

Furthermore, the sets $N^\oplus_t(\lambda, x_i)$ and $N^+_t(\lambda, x_i)$  which are \emph{maximal} with respect to the containment order of sets define families of non-$t$-stable pairs ($t$-unstable pairs, respectively) in $\mathcal R_{n,d}$. 
Any not $t$-stable (respectively $t$-un\-stable) pair $(g\cdot X, g\cdot H)$ belongs to one of these families for some group element $g$.
\end{theorem}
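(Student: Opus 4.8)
The plan is to derive the statement from the Hilbert--Mumford numerical criterion together with the finiteness already established in Theorem~\ref{theorem:wall-stratification}. With respect to the $t$-linearization, the criterion says that $(X,H)$ fails to be $t$-stable exactly when some nontrivial one-parameter subgroup $\lambda$ satisfies $\mu_t((X,H),\lambda)\leqslant 0$, and is $t$-unstable exactly when some $\lambda$ gives $\mu_t((X,H),\lambda)<0$. So the task is to repackage the statement ``there exists a destabilizing $\lambda$'' into the statement ``the monomials lie in one of the sets $N^\oplus_t$ or $N^+_t$''.

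First I would normalize the destabilizing subgroup. Given any destabilizing $\lambda$, after acting by a suitable group element $g$ we may diagonalize $\lambda$ with weakly decreasing, trace-zero integer weights; this conjugation is precisely what produces the $g$ appearing in the statement. Invoking the construction of $S_{n,d}$ and Theorem~\ref{theorem:wall-stratification}, I would then argue that it suffices to test the finitely many $\lambda\in S_{n,d}$: the function $\lambda\mapsto\mu_t((X,H),\lambda)$ is piecewise linear and completely determined by the finite weight data of the monomials of $F$ and of $F'$, so the sign of its optimum is already visible on this finite set of extremal one-parameter subgroups.

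The heart of the argument is the translation into monomials. For a fixed normalized $\lambda$ and a fixed coordinate $x_i$ (the coordinate whose $\lambda$-weight records the contribution of the normalized hyperplane $H$), I would write $\mu_t((X,H),\lambda)$ as the sum of a degree-$d$ term coming from the monomials of $F$ and a $t$-weighted linear term $t\langle x_i,\lambda\rangle$ coming from $H$. The inequality $\mu_t\leqslant 0$ (respectively $<0$) then translates into simultaneous requirements on the two equations: every monomial $m$ of $F$ must have combined $\lambda$-weight $\langle m,\lambda\rangle + t\langle x_i,\lambda\rangle$ that is non-negative (respectively positive), and every coordinate occurring in $F'$ must have $\lambda$-weight at least that of $x_i$. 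These two conditions are exactly the statement that the monomials of $(X,H)$ lie in the pair of sets $N^\oplus_t(\lambda,x_i)$ (respectively $N^+_t(\lambda,x_i)$) of Lemma~\ref{lemma:max-des-pairs}. Combined with the reduction above, this gives the claimed equivalence.

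For the final assertion, I would note that there are only finitely many pairs of sets $N^\oplus_t(\lambda,x_i)$ as $\lambda$ ranges over $S_{n,d}$ and $i$ over $0,\dots,n+1$, so restricting to those maximal under inclusion loses nothing: if the monomials of a pair lie in some set, they lie in a maximal one containing it. Each maximal $N^\oplus_t(\lambda,x_i)$ spans a linear subspace of the coefficient space of $(F,F')$ and hence a closed family in $\mathcal{R}_{n,d}$; by the equivalence just proved, the $SL$-translates of these finitely many families exhaust all non-$t$-stable (respectively $t$-unstable) pairs. I expect the main difficulty to lie in the third step: writing $\mu_t$ precisely as a hypersurface term plus a $t$-weighted hyperplane term and matching it term-by-term with the definition of $N^\oplus_t$ and $N^+_t$, while verifying that the reduction to $S_{n,d}$ genuinely captures every destabilizing direction and not merely the directions giving the walls.
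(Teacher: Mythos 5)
Your proposal follows essentially the same route as the paper: reduce to the finite set $S_{n,d}$ of normalized one-parameter subgroups (the paper cites Lemma~\ref{lemma:finiteness-lemma} for this step, which is the precise statement you need rather than Theorem~\ref{theorem:wall-stratification} itself), translate the numerical condition into containment of the monomials in a pair $N^\oplus_t(\lambda,x_i)$ or $N^+_t(\lambda,x_i)$, and pass to the finitely many maximal such pairs. The one genuine problem is a sign error in your statement of the numerical criterion. With the paper's convention, $(X,H)$ is $t$-stable when $\mu_t((X,H),\lambda)<0$ for \emph{all} nontrivial $\lambda$, so failure of $t$-stability means some $\lambda$ has $\mu_t\geqslant 0$, and $t$-instability means some $\lambda$ has $\mu_t>0$; you wrote $\mu_t\leqslant 0$ and $\mu_t<0$. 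This is not merely cosmetic, because it breaks the key translation step: since $\mu_t$ is the \emph{minimum} of $\langle v,\lambda\rangle$ over $v\in\mathcal X$ plus $t$ times the minimal weight of a coordinate in $\mathcal H$, it is exactly the condition $\mu_t\geqslant 0$ (together with $t\geqslant 0$ and taking $x_i=\min(\mathcal H)$ in the Mukai order) that forces \emph{every} pair $(v,m)\in\mathcal X\times\mathcal H$ to satisfy $\langle v,\lambda\rangle+t\langle m,\lambda\rangle\geqslant 0$; the inequality $\mu_t\leqslant 0$ that you wrote yields no such conclusion, and indeed your third paragraph silently uses the correct sign. Once this is fixed, your argument --- including the observation that membership of all monomials in some $N^\oplus_t(\lambda,x_i)$ forces membership in a maximal one, which cuts out a linear family in $\mathcal R_{n,d}$ --- coincides with the paper's proof via Lemma~\ref{lemma:max-des-pairs}.
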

These results allow us to identify non-$t$-stable pairs and these are either strictly $t$-semistable or $t$-unstable.

The Centroid Criterion gives a polyhedral interpretation of stability. Indeed, a pair $(X,H)$ determines a convex polytope $\overline{\mathrm{Conv}_t(X,H)}$ and the parameter $t$ determines a point $\mathcal O_t$ in affine space (for details see Section \ref{sec:centroid}).
\begin{lemma}[Centroid Criterion]
\label{lemma:centroid-criterion}
Let $t\in \mathbb Q_{\geqslant 0}$. A pair $(X,H)$ is $t$-semi\-stable (respectively $t$-stable) if and only if $\mathcal O_t\in \overline{\mathrm{Conv}_t(X,H)}$ ($\mathcal O_t\in \mathrm{Int}\left(\mathrm{Conv}_t(X,H)\right)$, respectively).
\end{lemma}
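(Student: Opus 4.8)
The plan is to deduce the criterion from the Hilbert--Mumford numerical criterion by converting the sign condition on the numerical function into a statement of convex geometry. First I would recall that, after fixing the linearization corresponding to the parameter $t$, a pair is $t$-semistable (respectively $t$-stable) if and only if the numerical function $\mu_t((X,H),\lambda)$ satisfies the appropriate inequality ($\geq 0$, respectively $>0$) for every nontrivial one-parameter subgroup $\lambda$. Since every one-parameter subgroup of $\SL$ is conjugate to a diagonal one, and conjugating $\lambda$ by $g$ has the same effect as replacing $(X,H)$ by $(g^{-1}\cdot X, g^{-1}\cdot H)$, it suffices to test diagonal $\lambda=\mathrm{diag}(\lambda_0,\dots,\lambda_{n+1})$ normalized by $\sum_i\lambda_i=0$; the reduction to the finite set $S_{n,d}$ from Definition \ref{definition:set-S} then guarantees that only finitely many directions must be examined.

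Next I would write $\mu_t$ explicitly for a diagonal $\lambda$. Because the chosen linearization is built from the degree-$d$ datum of $X$ and the degree-$1$ datum of $H$ weighted by $t$, the numerical function splits (up to normalization) as a weighted sum $\mu_t((X,H),\lambda)=\mu(F,\lambda)+t\,\mu(F',\lambda)$, and on a diagonal subgroup each summand is a maximum of the linear pairings $\langle a,\lambda\rangle$ taken over the exponent vectors $a$ of the monomials occurring in $F$ and in $F'$, respectively. Collecting these $t$-weighted exponent vectors produces exactly the finite set of lattice points whose convex hull is $\overline{\mathrm{Conv}_t(X,H)}$, while the normalization $\sum_i\lambda_i=0$ singles out the distinguished center $\mathcal O_t$ as the unique point at which every normalized functional vanishes once the $t$-dependent shift is accounted for. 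In these terms $\mu_t((X,H),\lambda)$ becomes $\max_a\langle a-\mathcal O_t,\lambda\rangle$ over the relevant $t$-weighted points $a$.

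The heart of the argument is then a separating-hyperplane computation. The condition that $\max_a\langle a-\mathcal O_t,\lambda\rangle\geq 0$ for all normalized $\lambda$ is, by the separation theorem for convex sets (equivalently Farkas' lemma, or linear-programming duality), precisely the condition that $\mathcal O_t$ cannot be strictly separated from the $t$-weighted points, i.e. that $\mathcal O_t\in\overline{\mathrm{Conv}_t(X,H)}$; if instead $\mathcal O_t$ lay outside the hull, the separating functional would furnish a destabilizing $\lambda$. Upgrading the weak inequality to a strict one matches the passage from the closed hull to its interior: $\max_a\langle a-\mathcal O_t,\lambda\rangle>0$ for all nonzero normalized $\lambda$ holds if and only if $\mathcal O_t$ lies in the relative interior of $\mathrm{Conv}_t(X,H)$, which corresponds to the $t$-stable case and to $\mathcal O_t\in \mathrm{Int}(\mathrm{Conv}_t(X,H))$.

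The main obstacle I anticipate is bookkeeping rather than conceptual: one must assemble $\mu_t$ for the pair so that the weight $t$ enters the exponent data with the correct scaling and so that the center $\mathcal O_t$ is pinned down unambiguously, and then match the boundary behavior of the Hilbert--Mumford inequalities (non-strict versus strict) to membership in the closed hull versus its interior. Care is also needed because the pairing lives on the affine hyperplane of normalized weights, so the separation must be carried out within that hyperplane; controlling the degenerate directions along which $\langle\,\cdot\,,\lambda\rangle$ is constant on the relevant points is the delicate issue that distinguishes strict semistability from stability.
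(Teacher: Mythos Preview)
Your strategy is the paper's: translate the Hilbert--Mumford inequality into a separating-hyperplane statement and back. Two bookkeeping points deserve correction, however, and one of them is exactly the reduction the paper carries out first.

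First, your sign convention is inverted relative to the paper. Here $\mu_t(X,H,\lambda)=\min_{x^I\in\mathcal X}\langle I,\lambda\rangle+t\min_{x_i\in\mathcal H}\langle x_i,\lambda\rangle$, and $t$-semistability means $\mu_t\le 0$ for all nontrivial $\lambda$, not $\mu_t\ge 0$; correspondingly the relevant extremum is a minimum, not a maximum. This does not break the argument (replace $\lambda$ by $-\lambda$), but you should align with the ambient conventions. Second, the finite set $S_{n,d}$ plays no role in this lemma: the separating-hyperplane step already produces a destabilizing diagonal $\lambda$ directly, and invoking $S_{n,d}$ here is a red herring.

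The substantive point you are missing is the reduction on the $H$-side. By definition $\overline{\mathrm{Conv}_t(X,H)}$ is built from the pairs $(v,b)$ with $v\in\mathcal X$ and $b=\min(\mathcal H)$ only, not from all monomials of $F'$. The paper's first step is to observe that for any normalized $\lambda$ one has $\min_{x_i\in\mathcal H}\langle x_i,\lambda\rangle=\langle x_k,\lambda\rangle$ with $x_k=\min(\mathcal H)$ (the Mukai order on $\Xi_1$ is total), so $\mu_t((X,H),\lambda)=\mu_t((X,\{x_k=0\}),\lambda)$. Only after this reduction does ``collecting the $t$-weighted exponent vectors'' produce the polytope $\overline{\mathrm{Conv}_t(X,H)}$ as defined; without it you would be working with a strictly larger polytope and the ``only if'' direction would not match the stated criterion. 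Once that is in place, the rest is exactly what you outline: a separating affine functional $\phi$ with $\phi(\mathcal O_t)=0$ gives, after clearing denominators and subtracting a constant to land in $\sum r_i=0$, a destabilizing $\lambda$; and conversely a destabilizing diagonal $\lambda$ defines $\phi(y)=\sum r_iy_i$ with $\phi(\mathcal O_t)=\frac{d+t}{n+2}\sum r_i=0$ and $\phi>0$ on the hull.
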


The boundary of $\overline{M}^{GIT}_{n,d,t}$  is of special interest for GIT problems. Each of the its points has a one-to-one correspondence to a strictly $t$-semistable closed orbit.

\begin{theorem}
\label{theorem:closed-orbits}
Assume $t\in (0,t_{n,d})$ and ground field of characteristic $0$. If a pair $(X,H)$ belongs to a closed strictly $t$-semistable orbit, then there are $g\in \SL_{n+2}$, $\lambda\in S_{n,d}$ and $x_i$ such that the set of monomials associated to $(g\cdot \mathcal X, g\cdot \mathcal H)$ corresponds to those in a pair of sets $(V^0_t(\lambda,x_i),B^0(\lambda,x_i))$ defined such that $(v,b)\in V^0_t(\lambda,x_i)\times B^0(\lambda,x_i)$ if and only if $\mu_t(v,b)=0$ and $(v,b)\in N^\oplus_t(\lambda, x_i)$. 
\end{theorem}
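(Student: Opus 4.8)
The plan is to combine the Hilbert–Mumford numerical criterion with the structural classification of not-$t$-stable pairs already in hand. By Theorem \ref{theorem:main}, a pair $(X,H)$ that fails to be $t$-stable can, after acting by some $g\in\SL_{n+2}$, be brought into a normal form in which its monomials lie in a maximal set $N^\oplus_t(\lambda,x_i)$ for some $\lambda\in S_{n,d}$ and some coordinate $x_i$. The task is then to refine this: among strictly $t$-semistable pairs, I want to single out exactly those lying on \emph{closed} orbits. The key principle from GIT (this is essentially Mumford's characterization, or the Kempf--Hilbert--Mumford theory of optimal destabilizing one-parameter subgroups) is that a strictly semistable orbit is closed if and only if the pair is, up to coordinate change, supported precisely on the $\mu_t$-weight-zero monomials of a destabilizing one-parameter subgroup.

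\emph{First} I would recall the numerical function $\mu_t(v,b)$ measuring the $\lambda$-weight of a monomial pair $(v,b)$, normalized by $t$, so that $t$-semistability is governed by $\max_\lambda \mu_t \geq 0$ with equality characterizing strict semistability along $\lambda$. \emph{Second}, given that $(X,H)$ lies on a closed strictly $t$-semistable orbit, I would use the fact that strict semistability forces the existence of some $\lambda\in S_{n,d}$ with $\mu_t(X,H,\lambda)=0$; the finiteness and sufficiency of $S_{n,d}$ (Theorem \ref{theorem:wall-stratification} and Definition \ref{definition:set-S}) guarantee that this destabilizing direction can be taken from the finite set $S_{n,d}$ after applying a suitable $g\in\SL_{n+2}$. \emph{Third}, and this is the heart of the argument, I would show that closedness of the orbit forces every monomial of $(X,H)$ to have $\lambda$-weight exactly zero, i.e.\ to lie in $V^0_t(\lambda,x_i)\times B^0(\lambda,x_i)$. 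The reason is the standard limit argument: acting by $\lambda(s)$ as $s\to 0$ sends any monomial of strictly positive weight to $0$ and fixes those of weight zero; if $(X,H)$ had a monomial of nonzero weight, then $\lim_{s\to 0}\lambda(s)\cdot(X,H)$ would be a \emph{distinct} point in the orbit closure (still semistable, since $\mu_t=0$ is preserved in the limit), contradicting closedness of the orbit.

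\emph{Finally} I would assemble these: the limit pair lies in the weight-zero locus, equals $(X,H)$ by closedness, and hence the monomials of $(g\cdot X, g\cdot H)$ are exactly those of $(V^0_t(\lambda,x_i),B^0(\lambda,x_i))$, which is the claimed conclusion; the containment in $N^\oplus_t(\lambda,x_i)$ is automatic since weight-zero monomials satisfy the defining inequality of the $N^\oplus$ set with equality. The characteristic-zero hypothesis enters to guarantee reductivity of $\SL_{n+2}$ and the good behavior of orbit closures needed for the limit argument (and to ensure that the one-parameter subgroup limits and the GIT quotient behave as expected).

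\emph{The main obstacle} I anticipate is the third step: verifying that no monomial of nonzero $\lambda$-weight can survive on a closed orbit requires care, because a monomial of \emph{negative} weight does not degenerate under $\lambda(s)$ as $s\to 0$ but rather under $s\to\infty$ (equivalently under $\lambda^{-1}$). I would need to argue that the normal form from Theorem \ref{theorem:main} can be chosen so that, relative to the optimal $\lambda$, all surviving monomials have nonnegative weight, and then that strict semistability ($\mu_t=0$) together with the centroid interpretation (Lemma \ref{lemma:centroid-criterion}) pins the weight-zero monomials as exactly those whose convex hull contains $\mathcal O_t$ on its boundary. Reconciling the one-sided limit behavior with the two-sided symmetry of strictly semistable configurations — and confirming that the resulting weight-zero set is precisely $V^0_t(\lambda,x_i)\times B^0(\lambda,x_i)$ rather than a proper subset — is where the delicate bookkeeping lies.
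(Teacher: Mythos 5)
Your argument is essentially correct, but it takes a genuinely different route from the paper. The paper's proof starts from the fact (citing \cite{dolgachev-lectures-git}) that a \emph{closed} strictly $t$-semistable orbit has infinite stabilizer, extracts a one-parameter subgroup $\lambda$ \emph{inside the stabilizer}, and observes that since $\lambda$ fixes the pair, semistability against both $\lambda$ and $\lambda^{-1}$ forces $\mu_t(X,H,\lambda)=0$ and places every monomial in $\mathrm{Ann}_t(\lambda,x_i)=V^0_t(\lambda,x_i)\times B^0(\lambda,x_i)$ via Proposition \ref{proposition:cartesian-product}. You instead take a \emph{destabilizing} $\lambda\in S_{n,d}$ with $\mu_t=0$ (which exists by strict semistability and Lemma \ref{lemma:finiteness-lemma}) and degenerate along it; this is the standard Kempf-style argument and it does work, but two points need tightening. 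First, your conclusion that the limit ``equals $(X,H)$ by closedness'' is not what closedness gives you: it gives that the limit lies in the \emph{orbit} of $(X,H)$, i.e.\ equals $g\cdot(X,H)$ for some $g$ --- which is exactly what the theorem asks for, so the fix is harmless but should be made, and it also disposes of your worry that the weight-zero set might be hit only up to a proper subset (containment in $V^0_t\times B^0_t$ is all that is claimed or needed). Second, your anticipated obstacle about negative-weight monomials is a non-issue: since $\mu_t$ is defined as a minimum, $\mu_t(X,H,\lambda)=0$ already forces every monomial pair $(v,b)$ of the pair to satisfy $\langle v,\lambda\rangle+t\langle b,\lambda\rangle\geq 0$, so the one-sided limit kills precisely the strictly positive weights and retains the annihilator; you do still need to note that the limit point remains semistable (standard when $\mu_t=0$) so that closedness of the orbit in the semistable locus applies. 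The trade-off: the paper's stabilizer argument is shorter and avoids limits entirely, while yours produces the destabilizing $\lambda$ directly in $S_{n,d}$ and makes the degeneration to the polystable representative explicit, which is closer in spirit to how these orbits are computed in Section \ref{section:algorithm}.
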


\subsection{Conventions and notation}
\label{sec:notation}
We work over an algebraically closed field $\mathbb K$. Let $G=\mathrm{SL}(n+2,\mathbb K)$ and $T\subset G$ be a fixed maximal torus. The torus $T\cong (\mathbb K^*)^{n+2}$ induces lattices of characters
$M=\mathrm{Hom}_{\mathbb Z}(T,\mathbb G_m)\cong \mathbb Z^{n+2}$
and of one-parameter subgroups
$N=\mathrm{Hom}_{\mathbb Z}(\mathbb G_m, T) \cong \mathbb Z^{n+2},$ with a natural pairing:
$$\langle-,-\rangle\colon M\times N \longrightarrow \mathrm{Hom}_{\mathbb Z}(\mathbb G_m,\mathbb G_m	)\cong \mathbb Z.$$
We choose projective coordinates $(x_0:\cdots : x_{n+1})$ in $\mathbb P^{n+1}$ such that $T$ is diagonal.  Given a one-parameter subgroup $\lambda \colon \mathbb G_m\cong \mathbb K^*\rightarrow T\subset G$ in $M$, we say it is \emph{normalized} (\cite[\S7.2(b)]{mukai-book}) if
$$\lambda(s)=\diag(s^{r_0},\ldots,s^{r_{n+1}})\coloneqq\left( \begin{array}{ccc}
s^{r_0} & \cdots & 0 \\
\vdots & \ddots & \vdots\\
0 & \cdots & s^{r_{n+1}}\end{array} \right),$$
such that $r_0\geqslant \cdots\geqslant r_{n+1}$, $\sum r_i=0$ and not all $r_i=0$. In particular $r_0>0$ and $r_{n+1}<0$.
Denote by $\Xi_k$ the set of all monomials of degree $k$ in variables $x_0,\dots,x_{n+1}$. Since each monomial in $\Xi_k$ can be identified with a character $\mathcal X^a\in M$ of weight $k$, we can see the pairing $\langle-,-\rangle$ of one-parameter subgroups with monomials as: 
$$\langle x_0^{d_0}\cdots x_{n+1}^{d_{n+1}},\diag(s^{r_0},\ldots,s^{r_{n+1}})\rangle=\langle \mathcal X^a, \lambda\rangle=\sum d_i\cdot r_i\in \mathbb Z,$$
where $a=(d_0,\ldots,d_{n+1})\in (\mathbb Z_{\geqslant 0})^{n+2}$, $\sum d_i=k$.

Let $X$ be a hypersurface of degree $d$ defined by  polynomials $F=\sum c_Ix^I$ with $I=(d_0,\ldots,d_{n+1})$ and let $H$ be a hyperplane defined by $\sum h_i x_i$ where $c_I,h_i \in \mathbb K$.  We define their \emph{associated sets of monomials} $(\mathcal X, \mathcal H)$ as the pair of sets:
\begin{align*}
\mathcal X = \{x^I\in \Xi_d \ | \ c_I \neq 0 \}, & & \mathcal H = \{x_i  \in \Xi_1 \ | \ h_i \neq 0 \}.
\end{align*}
Let $\lambda$ be a normalized one-parameter subgroup of $G$. By definition \cite[21, p. 81]{mumford-git}, the \emph{Hilbert-Mumford function} is
$$\mu(X,\lambda)\coloneqq\min\{\langle I,\lambda\rangle \ | \ c_I\neq 0\}.$$
Note that for fixed $X$, the function $\mu(X, -)$ is piecewise linear.  Finally, there is a natural partial order on $\Xi_k$ which we call \emph{Mukai order} \cite[Lemma 7.18]{mukai-book}: given $v,m\in \Xi_k$, 
$$v\leqslant m  \ \Longleftrightarrow \langle v,\lambda\rangle\leqslant \langle m,\lambda\rangle,$$
for all normalized one-parameter subgroups $\lambda$. Under this order there is a unique \emph{maximal element} $x_0^k $ and unique \emph{minimal element} $x_{n+1}^k $ in $\Xi_k$. In the special case when $k=1$, the Mukai order is a total one. 

Our results, together with a good knowledge of the singularities of $(X,D\cap H)$ for given $d$ and $n$, are sufficient to describe all the GIT compactifications. A sketch of such an algorithm is discussed in section \ref{section:algorithm}. We refer the reader to \cite{gallardo-jmg-code} for the details and to \cite{gallardo-jmg-vgit-cubic-surfaces} for the case of cubic surfaces and their anticanonical divisors.

\subsection*{Acknowledgments}
Our work is in debt with  R. Laza whose work on curves inspired us to generalize his results to higher dimensions. We thank him, R. Dervan and D. Swinarski for useful discussions.  P. Gallardo is supported by the NSF grant DMS-1344994 of the RTG in Algebra, Algebraic Geometry, and Number Theory, at the University of Georgia. 

Our results have been implemented in a Python software package available in \cite{gallardo-jmg-code}. Detailed algorithms implemented in the software package will appear in \cite{gallardo-jmg-experimental}. The source code, but not the text of this article, is released under a Creative Commons CC BY-SA $4.0$ license. See \cite{gallardo-jmg-code} for details. If you make use of the source code in an academic or commercial context, you should acknowledge this by including a reference or citation to this article.

\section{VGIT Setting}
\label{sec:setting}

Let $\mathcal R=\mathcal R_{n,d}$  be the  parameter scheme of pairs $(X,H)$
 given by 
$$
\mathcal R_{n,d}=\mathbb P(H^0(\mathbb P^{n+1}, \mathcal O_{\mathbb P^{n+1}}(d)))\times \mathbb P(H^0(\mathbb P^{n+1}, \mathcal O_{\mathbb P^{n+1}}(1))))\cong \mathbb P^N\times \mathbb P^{n+1},
$$
where $N= {\binom{n+1+d}{d}}-1$. 
\begin{lemma}
The set of $G$-linearizable line bundles $\Pic^G(\mathcal R)$ is isomorphic to $\mathbb{Z}^2$.   Then a line bundle $\mathcal L \in\Pic^{G}(\mathcal R),$ is ample if and only if 
$$\mathcal L=\mathcal{O}(a,b)\coloneqq\pi_1^*(\mathcal O_{\mathbb P^N}(a))\otimes\pi_2^*(\mathcal O_{\mathbb P^{n+1}}(b))\in\Pic^{G}(\mathcal R),$$
where $\pi_1$ and $\pi_2$ are the natural projections on $\mathbb P^N$ and $\mathbb P^{n+1}$, respectively and $a, b>0$. 
\end{lemma}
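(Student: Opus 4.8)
The plan is to compute the ordinary Picard group first, then promote it to the equivariant one by controlling the forgetful map $\Pic^G(\mathcal R)\to \Pic(\mathcal R)$, and finally to read off ampleness from the product structure.

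First I would record that $\mathcal R=\mathbb P^N\times\mathbb P^{n+1}$ is a product of projective spaces over the algebraically closed field $\mathbb K$. Since $H^1(\mathbb P^a,\mathcal O)=0$, the K\"unneth formula (equivalently, the fact that $\Pic(Y\times Z)\cong \Pic(Y)\times\Pic(Z)$ when $Y,Z$ are projective and one factor has vanishing $H^1(\mathcal O)$) yields $\Pic(\mathcal R)\cong \Pic(\mathbb P^N)\times\Pic(\mathbb P^{n+1})\cong\mathbb Z^2$, freely generated by $\pi_1^*\mathcal O_{\mathbb P^N}(1)$ and $\pi_2^*\mathcal O_{\mathbb P^{n+1}}(1)$.

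Next I would pass to the equivariant group. Because $\mathcal R$ is complete and connected, its only global invertible functions are the nonzero constants, so any two $G$-linearizations of a fixed line bundle differ by a character of $G$; since $G=\SL(n+2,\mathbb K)$ is semisimple and simply connected its character group $\hat G$ is trivial, whence the forgetful map $\Pic^G(\mathcal R)\to\Pic(\mathcal R)$ is injective, i.e. a linearization is unique when it exists. For surjectivity I would exhibit linearizations on the two generators directly: each factor is the projectivization of a $G$-module, namely $H^0(\mathbb P^{n+1},\mathcal O(d))$ and $H^0(\mathbb P^{n+1},\mathcal O(1))$ with the natural $\SL(n+2,\mathbb K)$-action on forms. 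On $\mathbb P(V)$ the tautological bundle $\mathcal O(-1)$ is a $G$-subbundle of the trivial bundle $V\times\mathbb P(V)$, hence $G$-linearized, so $\mathcal O(1)$ inherits a linearization; pulling back along $\pi_1,\pi_2$ linearizes both generators, and therefore every $\mathcal O(a,b)=\pi_1^*\mathcal O(a)\otimes\pi_2^*\mathcal O(b)$. Thus the forgetful map is onto, and combined with injectivity we get $\Pic^G(\mathcal R)\cong\Pic(\mathcal R)\cong\mathbb Z^2$, each class carrying a canonical linearization.

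Finally, for ampleness, restricting $\mathcal O(a,b)$ to a fiber $\{pt\}\times\mathbb P^{n+1}$ forces $b>0$, and restricting to $\mathbb P^N\times\{pt\}$ forces $a>0$; conversely, when $a,b>0$ the bundle is a positive combination of the two generators and becomes very ample after the corresponding Veronese--Segre embedding, so it is ample. The substance of the argument is the equivariant step: the two points that genuinely need care are verifying that $\hat G$ is trivial (to guarantee uniqueness of the linearization) and that both factors really arise as projectivizations of $G$-modules (to guarantee existence); by contrast the ampleness criterion is a routine fact about products of projective spaces.
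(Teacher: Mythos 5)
Your proof is correct, and it reaches the same two conclusions ($\Pic(\mathcal R)\cong\mathbb Z^2$ and the forgetful map $\Pic^G(\mathcal R)\to\Pic(\mathcal R)$ is an isomorphism) by a partly different route. The paper quotes the exact sequence $0\to\mathcal X(G)\to\Pic^G(\mathcal R)\to\Pic(\mathcal R)\to\Pic(G)$ from Dolgachev and then cites $\mathcal X(G)=\{1\}$ and $\Pic(G)=\{1\}$ for $G=\SL(n+2,\mathbb K)$; you prove injectivity the same way (two linearizations of a fixed bundle on a complete connected variety differ by a character, and $\SL$ has none), but you replace the appeal to $\Pic(G)=\{1\}$ for surjectivity by an explicit construction: both factors of $\mathcal R$ are projectivizations of $G$-modules, so the tautological subbundle is $G$-invariant and $\mathcal O(1)$ on each factor carries a canonical linearization, which you then pull back and tensor. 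Your version is more self-contained and makes the linearizations concrete (which is what one actually uses later when computing Hilbert--Mumford functions), at the cost of being longer; the paper's citation of the exact sequence is shorter but hides the existence statement inside $\Pi(G)$-vanishing. You also spell out the ampleness equivalence (restriction to fibers for necessity, Segre--Veronese for sufficiency), which the paper asserts without argument; that part is routine but it is correctly handled.
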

\begin{proof}
Let $\pi_1\colon \mathcal R \rightarrow \mathbb P^N,\ \pi_2\colon \mathcal R\rightarrow \mathbb P^{n+1}$ be the natural projections. The action of $G$ on $\Xi_d$ and $\Xi_1$ induces a natural action on $\mathcal R\cong \mathbb P^{N}\times \mathbb P^{n+1}$, which preserves the fibers. Hence we have an action of $G$ on both $\mathbb P^N$ and $\mathbb P^{n+1}$ and $\pi_1,\pi_2$ are morphisms of $G$-varieties. Recall there is an exact sequence (see \cite[Theorem 7.2]{dolgachev-lectures-git}):
$$0\longrightarrow \mathcal X(G)\longrightarrow \Pic^G(\mathcal R)\longrightarrow\Pic(\mathcal R)\longrightarrow \Pic(G),$$
 where $\mathcal X(G)$ is the kernel of the forgetful morphism $\Pic^G(\mathcal R)\rightarrow \Pic(\mathcal R)$.
Since $\mathcal X(G)=\{1\}$ and $\Pic(G)=\{1\}$ by \cite[Chapter 7.2]{dolgachev-lectures-git} then $\Pic^G(\mathcal R)\cong \Pic(\mathcal R)$. Moreover, given that $\Pic^G(\mathcal R)\subseteq\Pic(\mathcal R)^G\subset \Pic(\mathcal R)$, were $\Pic(\mathcal R)^G$ is the group of $G$-invariant line bundles, there result follows from
$$\Pic^{G}(\mathcal R)\cong\Pic(\mathcal R)^G\cong\Pic(\mathcal R)\cong \pi_1^*(\Pic(\mathbb P^N))\times\pi_2^*(\Pic(\mathbb P^{n+1}))\cong \mathbb Z \times \mathbb Z.$$
\end{proof}
\noindent
For $\mathcal L\cong\mathcal O(a,b)$,  the  GIT quotient is defined as:
$$\overline{M}^{GIT}_{n,d,t}=\Proj\bigoplus_{m\geqslant 0} H^0(\mathcal R,\mathcal L^{\otimes m})^{G},$$
where $t=\frac{b}{a}$. 

The main tool to understand variations of GIT  from a computational viewpoint is the Hilbert-Mumford numerical criterion which in our particular case has the following form.
\begin{lemma}\label{eq:HM-function-pairs}
Given an ample $\mathcal L\cong \mathcal O(a,b) \in \Pic^{G}(\mathcal R)$,  let $(X,H)$ be a pair parametrized by $\mathcal R$, and let $\lambda$ be a normalized one-parameter subgroup of $G$.  The  \emph{Hilbert-Mumford function} (see  \cite[Definition 2.2]{mumford-git}), 
is $\mu^{\mathcal L}((X,H),\lambda) =a \mu_t(X,H,\lambda)$ where $t=\frac{b}{a}\in \mathbb Q_{>0}$ and
\begin{align*}
\mu_t(X,H,\lambda) &\coloneqq\mu(X,\lambda)+t\mu(H,\lambda)\\
&= \min\{\langle I,\lambda\rangle \ | \ x^I  \in \mathcal X \} + t\min\{ r_i \ | \ x_i \in \mathcal H \}.
\end{align*}
\end{lemma}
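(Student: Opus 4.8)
The plan is to reduce the statement to the standard functorial properties of Mumford's Hilbert–Mumford function and then read off each projective factor from the weight decomposition. Recall from \cite{mumford-git} (see also \cite{dolgachev-lectures-git}) that for a $G$-variety $Y$ equipped with a $G$-linearized line bundle $\mathcal M$, a point $y\in Y$ and a one-parameter subgroup $\lambda$, the number $\mu^{\mathcal M}(y,\lambda)$ is computed from the weight with which $\lambda$ acts on the fiber of $\mathcal M$ over the limit $\lim_{s\to 0}\lambda(s)\cdot y$. This construction is defined for \emph{every} $G$-linearized line bundle, not just ample ones, and it enjoys three elementary properties which I would invoke: it is additive in the line bundle, $\mu^{\mathcal M_1\otimes \mathcal M_2}(y,\lambda)=\mu^{\mathcal M_1}(y,\lambda)+\mu^{\mathcal M_2}(y,\lambda)$; it is homogeneous, $\mu^{\mathcal M^{\otimes k}}(y,\lambda)=k\,\mu^{\mathcal M}(y,\lambda)$; and it is natural under $G$-equivariant morphisms, $\mu^{\phi^*\mathcal M}(y,\lambda)=\mu^{\mathcal M}(\phi(y),\lambda)$.

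First I would use that $\pi_1,\pi_2$ are morphisms of $G$-varieties (established in the previous lemma) and that $\mathcal O(a,b)=\pi_1^*\mathcal O_{\mathbb P^N}(a)\otimes \pi_2^*\mathcal O_{\mathbb P^{n+1}}(b)$. Applying additivity, naturality and homogeneity in turn, and noting that the (nef) pullbacks $\pi_i^*\mathcal O(\cdot)$ have a well-defined $\mu$ by the remark above, gives
$$\mu^{\mathcal O(a,b)}((X,H),\lambda)=a\,\mu^{\mathcal O_{\mathbb P^N}(1)}(X,\lambda)+b\,\mu^{\mathcal O_{\mathbb P^{n+1}}(1)}(H,\lambda),$$
so the whole problem decouples into the two factors $\mathbb P^N=\mathbb P(H^0(\mathcal O(d)))$ and $\mathbb P^{n+1}=\mathbb P(H^0(\mathcal O(1)))$.

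Next I would identify each factor explicitly. Writing $F=\sum_I c_I x^I$, the monomials $x^I$ form a basis of eigenvectors for the $\lambda$-action, with $x^I$ of weight $\langle I,\lambda\rangle$, so that $\lambda(s)\cdot F=\sum_I c_I s^{\langle I,\lambda\rangle}x^I$. Rescaling and letting $s\to 0$ isolates the components of extremal weight, and the induced weight on the fiber of $\mathcal O_{\mathbb P^N}(1)$ over the limit point is $\min\{\langle I,\lambda\rangle\mid c_I\neq 0\}=\mu(X,\lambda)$, which is exactly the quantity already fixed in Section \ref{sec:notation}. The same computation on $\mathbb P^{n+1}$, where $x_0,\dots,x_{n+1}$ are eigenvectors of weights $r_0,\dots,r_{n+1}$, yields $\mu^{\mathcal O_{\mathbb P^{n+1}}(1)}(H,\lambda)=\min\{r_i\mid x_i\in\mathcal H\}=\mu(H,\lambda)$. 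Substituting these into the displayed identity and factoring out $a>0$ with $t=b/a$ gives
$$\mu^{\mathcal O(a,b)}((X,H),\lambda)=a\,\mu(X,\lambda)+b\,\mu(H,\lambda)=a\bigl(\mu(X,\lambda)+t\,\mu(H,\lambda)\bigr)=a\,\mu_t(X,H,\lambda),$$
as claimed.

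The one point that requires genuine care, rather than bookkeeping, is the weight computation in the previous paragraph: one must check that the $s\to0$ limit together with the sign convention in force produces the \emph{minimum} over the occupied weights (consistent with the normalization $r_0\geq\cdots\geq r_{n+1}$ and with the definition of $\mu(X,\lambda)$ recorded earlier), rather than the maximum. Everything else—the additivity, homogeneity and naturality of $\mu$, together with the equivariance of the projections—is standard and reduces the lemma to this single orientation check.
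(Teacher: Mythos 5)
Your proposal is correct and follows essentially the same route as the paper: decompose $\mathcal O(a,b)$ as $\pi_1^*\mathcal O_{\mathbb P^N}(a)\otimes\pi_2^*\mathcal O_{\mathbb P^{n+1}}(b)$, then apply additivity of $\mu^{\mathcal L}$ in the line bundle and its naturality under the $G$-equivariant projections to reduce to the two factors. The only difference is that you spell out the weight computation identifying $\mu^{\mathcal O_{\mathbb P^N}(1)}(X,\lambda)$ with $\min\{\langle I,\lambda\rangle \mid c_I\neq 0\}$, which the paper takes as the definition already fixed in its notation section.
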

\begin{proof}
 By \cite[p. 49]{mumford-git}, for fixed $(X,H)$ and $\lambda$, $\mu^{\mathcal L}\colon \Pic^G(\mathcal R)\rightarrow \mathbb Z$ is a group homomorphism. Moreover, given any $G$-equivariant morphism of $G$-varieties $\pi\colon \mathcal R\rightarrow Y$, we have that $\mu^{\pi^*\mathcal L}((X,H),\lambda)=\mu^{\mathcal L}(\pi(X,H),\lambda)$. Applying these two properties, the result follows from:
\begin{align*}
\mu^{\mathcal O(a,b)}((X,H),\lambda)&=\mu^{\pi^*_1\mathcal O_{\mathbb P^N}(a)\otimes\pi^*_2\mathcal O_{\mathbb P^{n+1}}(b)}((X,H),\lambda)\\
&=\mu^{\pi^*_1\mathcal O_{\mathbb P^N}(a)}((X,H),\lambda) + \mu^{\pi^*_2\mathcal O_{\mathbb P^{n+1}}(b)}((X,H),\lambda)\\
&=a\mu^{\mathcal O_{\mathbb P^N}(1)}(X,\lambda) + b\mu^{\mathcal O_{\mathbb P^{n+1}}(1)}(H,\lambda)=a\mu_t(X,H,\lambda).
\end{align*}
\end{proof}
\begin{remark}
\label{remark:stability-monomial-dependent}
Let $(X,H)$ and $(X',H')$ be such that $(\mathcal X,\mathcal H')=(\mathcal X, \mathcal H)$.
Then, $\mu_t(X,H,\lambda) =\mu_t(X',H',\lambda)$.
\end{remark}
\begin{definition}
Let $t\in \mathbb Q_{\geqslant 0}$. 
The pair $(X,H)$ is \emph{$t$-stable} (resp. \emph{$t$-semistable}) if $\mu_t(X,H,\lambda)<0$ (resp. $\mu_t(X,H,\lambda)\leq 0$) for all non-trivial one-parameter subgroups $\lambda$ of $G$. A pair $(X,H)$ is \emph{$t$-unstable} if it is not $t$-semistable. A pair $(X,H)$ is \emph{strictly $t$-semistable} if it is $t$-semistable but not $t$-stable. 
\end{definition}

\section{Stratification of the space of stability conditions}
\label{sec:1PS-and-chambers}

In this section, we fix a maximal torus $T$ of one-parameter subgroups of $G$ and a coordinate system of $\mathbb P^n$ such that $T$ is diagonal in $G$.
\begin{definition}
\label{definition:set-S}
The \emph{fundamental set} $S_{n,d}$ \emph{of one-parameter subgroups} 
$\lambda \in T$ consists of all non-trivial elements $\lambda=\diag(s^{r_0},\ldots,s^{r_{n+1}})$ where
$$(r_0,\ldots,r_{n+1})=c(\gamma_0,\ldots,\gamma_{n+1})\in \mathbb Z^{n+1}$$
satisfying the following:
\begin{itemize}
\item[(1)] $\gamma_i=\frac{a_i}{b_i}\in \mathbb Q$ such that $\gcd(a_i,b_i)=1$ for all $i=0,\ldots,n+1$ and $c=\lcm(b_0,\ldots,b_{n+1})$.
\item[(2)] $1=\gamma_0\geqslant \gamma_1\geqslant \cdots\geqslant \gamma_{n+1}=-1-\sum_{i=1}^n\gamma_i.$
\item[(3)] $(\gamma_0,\ldots,\gamma_{n+1})$ is the unique solution of a consistent linear system given by $n$ equations chosen from the union of the following sets:
\begin{align}
&\mathrm{Eq}(n,d)\coloneqq\left\{\gamma_i-\gamma_{i+1} =0 \ | \ i=0,\ldots,n\right\}\cup\label{eq:Equations-S}\\
&\left\{\sum_{i=0}^{n+1}(d_i-\bar d_i)\gamma_i = 0 \ | \ d_i,\bar d_i\in \mathbb Z_{\geqslant 0} \text{ for all } i \text{ and } \sum_{i=0}^{n+1} d_i=\sum_{i=0}^{n+1} \bar d_i = d\right\}.\nonumber
\end{align}
\end{itemize}
The set $S_{n,d}$ is finite since there are a finite number of monomials in $\Xi_d$.
\end{definition}

\begin{lemma}
\label{lemma:finiteness-lemma}
A pair $(X,H)$ is not $t$-stable (respectively not $t$-semistable) if and only if there is $g\in G$ satisfying
$$\mu_t(X,H) = \max_{\substack{\lambda\in S_{n,d}}}\{\mu_t(g \cdot X, g \cdot H,\lambda)\} \geqslant 0 \qquad (\text{respectively }>0)$$
where $\mu_t$ is the Hilbert-Mumford function defined in Lemma  \ref{eq:HM-function-pairs} and $S_{n,d}$ is the fundamental set of Definition \ref{definition:set-S}.
\end{lemma}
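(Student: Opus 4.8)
The plan is to combine the Hilbert--Mumford numerical criterion with the fact that, for a fixed pair, $\mu_t(X,H,-)$ is a concave piecewise-linear function of the one-parameter subgroup, so that detecting a nonnegative value reduces from an infinite family of subgroups to a finite vertex enumeration. The reverse implication is immediate: if $\mu_t(g\cdot X, g\cdot H,\lambda)\geq 0$ for some $g\in G$ and some $\lambda\in S_{n,d}$ (a genuine normalized one-parameter subgroup), then $\lambda$ destabilizes $(g\cdot X,g\cdot H)$, and since stability is $G$-invariant, $(X,H)$ is not $t$-stable; the strict inequality handles the not-$t$-semistable case identically. So the content is the forward direction.

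First I would invoke the numerical criterion of \cite[Theorem 2.1]{mumford-git} together with Lemma \ref{eq:HM-function-pairs}: $(X,H)$ fails to be $t$-stable (resp. $t$-semistable) exactly when some nontrivial $\lambda$ satisfies $\mu_t(X,H,\lambda)\geq 0$ (resp. $>0$). Every such $\lambda$ is conjugate into the fixed maximal torus $T$ and, after a permutation of coordinates, may be taken normalized, say $\lambda=g^{-1}\lambda'g$ with $\lambda'\in T$ normalized; using the equivariance $\mu_t(X,H,g^{-1}\lambda'g)=\mu_t(g\cdot X,g\cdot H,\lambda')$ and the positive homogeneity of $\mu_t$ in $\lambda'$, the question becomes whether the pair $(X',H')=(g\cdot X,g\cdot H)$ satisfies $\mu_t(X',H',\gamma)\geq 0$ (resp. $>0$) somewhere on the compact polytope of normalized rays
$$D=\{\,\gamma=(\gamma_0,\dots,\gamma_{n+1})\ :\ \gamma_0=1,\ \gamma_0\geq\cdots\geq\gamma_{n+1},\ \sum_{i=0}^{n+1}\gamma_i=0\,\}.$$
Only the sign of $\mu_t$ matters, so working with this rational rescaling $\gamma$ of $\lambda'$ loses nothing.

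Next I would exploit the shape of $\mu_t$ on $D$. Since $\lambda'$ is normalized, $\mu(H,\gamma)=\gamma_{j}$ with $j=\max\{i:x_i\in\mathcal H'\}$ is a single linear form on all of $D$, while $\mu(X',\gamma)=\min_{x^I\in\mathcal X'}\langle I,\gamma\rangle$ is a minimum of linear forms; hence $\mu_t(X',H',-)$ is \emph{concave} and piecewise linear on $D$. Its regions of linearity are cut out by the tie-breaking walls $\langle I-J,\gamma\rangle=0$ for $I,J\in\mathcal X'$ together with the facets $\gamma_i=\gamma_{i+1}$ of $D$, and all of these hyperplanes belong to the set $\mathrm{Eq}(n,d)$ of Definition \ref{definition:set-S}. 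The elementary fact I would use is that a concave piecewise-linear function on a polytope attains its maximum at a vertex of the induced subdivision: if $v$ is an extreme point of the (compact, convex) set of maximizers and $\sigma$ is the cell whose relative interior contains $v$, then affinity of $\mu_t$ on $\sigma$ together with the interior maximum forces $\mu_t$ to be constant on $\sigma$, so $\sigma$ lies in the maximizer set and extremality of $v$ yields $\dim\sigma=0$.

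Finally I would identify the relevant vertices with $S_{n,d}$. The affine plane $\{\gamma_0=1,\ \sum_i\gamma_i=0\}$ has dimension $n$, so such a vertex is the unique solution of $n$ independent equations drawn from the walls and facets above, that is, from $\mathrm{Eq}(n,d)$; the coefficients being integral, the solution is rational, and clearing denominators by $c=\lcm(b_0,\dots,b_{n+1})$ produces exactly an element of $S_{n,d}$ satisfying conditions (1)--(3). Consequently the maximum of $\mu_t(X',H',-)$ over $D$ is realized at a point of $S_{n,d}$, so $\max_{\lambda\in S_{n,d}}\mu_t(X',H',\lambda)\geq 0$ for the very same $g$, giving the stated equality and inequality. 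I expect the main obstacle to be precisely this vertex-attainment-and-identification step: one must verify that the walls arising from the particular monomial set $\mathcal X'$ and the facets of $D$ are all contained in the universal family $\mathrm{Eq}(n,d)$, and that each maximizing vertex is pinned down by exactly $n$ consistent equations, so that no destabilizing direction is lost in passing to the finite set $S_{n,d}$.
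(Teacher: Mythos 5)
Your argument is correct and follows essentially the same route as the paper's: reduce via conjugation and equivariance to a normalized one-parameter subgroup in the fixed torus, note that $\mu_t$ is piecewise linear on the polytope of normalized subgroups with linearity regions cut out by the hyperplanes of $\mathrm{Eq}(n,d)$, and conclude that the relevant extremum is attained at a vertex of the induced subdivision, which by construction is (up to rescaling) an element of $S_{n,d}$. The only cosmetic differences are that you work on the compact $n$-dimensional slice $\gamma_0=1$ and prove vertex-attainment via extreme points of the maximizer set, whereas the paper recurses on the dimension of the faces of $\Delta$.
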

\begin{proof}
Let $R^{ns}_{T_t}$ be the non-$t$-stable loci of $\mathcal R$ with respect to a maximal torus $T$; and let 
$\mathcal R^{ns}$ be the non t-stable loci of $\mathcal R$.
By \cite[p. 137]{dolgachev-lectures-git}), the following holds
\begin{align*}
\mathcal R^{ns} = \bigcup_{T_i \subset G} R_{T_i}^{ns}.
\end{align*}
Let  $(X',H')$ be a pair which is not $t$-stable. Then, $\mu_t(X',H',\rho)\geqslant 0$ for some $\rho \in T'$ in a maximal torus $T'$ which may be \emph{different} from $T$. All the maximal tori are conjugate to each other in $G$, and by \cite[Exercise 9.2.(i)]{dolgachev-lectures-git} the following holds:
$$
\mu_t((X',H'), \rho) = \mu_t(g \cdot (X',H'), g\rho g^{-1}) .
$$
Then, there is $g_0 \in G$ such that $\lambda:=g_0\rho g_0^{-1}$ is normalized and $(X,H)\coloneqq g_0\cdot(X',H')$ has coordinates in our coordinate system such that $\mu_t(X,H,\lambda)\geqslant 0$.  In these coordinate system one-parameter subgroups form a closed convex polyhedral subset $\Delta$ of dimension $n+1$ in $M\otimes \mathbb Q\cong \mathbb Q^{n+2}$ (in fact $\Delta$ is a standard simplex). Indeed, this is the case since for any normalized one-parameter subgroup, $\lambda=\diag(s^{r_0},\ldots,s^{r_{n+1}})$, $\sum r_i=0$ and $r_i-r_{i+1}\geqslant 0$. 

By Lemma \ref{eq:HM-function-pairs}, for any fixed $t$, $X$ and $H$, the function $\mu_t(X,H,-):M\otimes \mathbb Q\rightarrow \mathbb Q$ is piecewise linear. The critical points of $\mu_t$ (i.e. the points where $\mu_t$ fails to be linear) correspond to those points in $M\otimes\mathbb Q$ where $\langle I, \lambda\rangle = \langle\overline{I},\lambda\rangle$ for $I=(d_0,\ldots,d_{n+1}),\ \overline{I}=(\overline{d_0},\ldots,\overline{d_{n+1}})$ representing monomials of degree $d$ of the form $f=\sum f_Ix^I$ defining $f$. Since $\langle-,-\rangle$ is bilinear that is equivalent to say that $\langle I-\overline{I},\lambda\rangle=0$. These points define a hyperplane in $M\otimes \mathbb Q$ and the intersection of this hyperplane with $\Delta$ is a simplex $\Delta_{I,\overline{I}}$ of dimension $n$.

The function $\mu_t(X,H,-)$ is linear on the complement of the hyperplanes defined by $\langle I-\overline{I},\lambda\rangle=0$. Hence its minimum is achieved on the boundary, i.e. either on $\partial\Delta$ or on $\Delta_{I,\overline{I}}$ which are all convex polytopes of dimension $n$. We can repeat this reasoning by finite inverse induction on the dimension until we conclude that the minimum of $\mu_t(X,H,-)$ is achieved at one of the vertices of $\Delta$ or $\Delta_{I,\overline{I}}$. But these correspond precisely, up to multiplication by a constant, to the finite set of one-parameter subgroups in $S_{n,d}$.
\end{proof}
\begin{corollary}
\label{corollary:interval-stability}
Let $(X,H)\in \mathcal R$ and
\begin{align*}
a&=\min\{t\in \mathbb Q_{\geq 0}\ | \ \mu_t(g\cdot (X,H),\lambda_i) \leq 0 \text{ for all } \lambda_i\in S_{n,d},\ g\in G\},\\
b&=\max\{t\in \mathbb Q_{\geq 0}\ | \ \mu_t(g\cdot (X,H),\lambda_i) \leq 0 \text{ for all } \lambda_i\in S_{n,d},\ g\in G\}.
\end{align*}
If $(X,H)$ is $t$-semistable for some $t\in \mathbb Q_{\geq 0}$, then
\begin{itemize}
	\item[(i)] $(X,H)$ is $t$-semistable if and only if $t\in[a,b]\cap \mathbb Q_{\geq 0}$,
	\item[(ii)] if $(X,H)$ is $t$-stable for some $t$, then $(X,H)$ is $t$-stable for all $t\in (a,b)\cap \mathbb Q_{>0}$.
\end{itemize}
We will call $[a,b]$ the \emph{interval of stability} of the pair $(X,H)$. We say $[a,b]=\emptyset$ if $(X,H)$ is $t$-unstable for all $t\in \mathbb Q_{\geq 0}$.
\end{corollary}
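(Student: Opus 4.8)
The plan is to exploit the affine dependence of the Hilbert-Mumford function on $t$. By Lemma~\ref{eq:HM-function-pairs}, for fixed $X$, $H$, $g\in G$ and $\lambda\in S_{n,d}$ the quantity $\mu_t(g\cdot(X,H),\lambda)=\mu(g\cdot X,\lambda)+t\,\mu(g\cdot H,\lambda)$ is affine in $t$, with slope $\mu(g\cdot H,\lambda)$ and intercept $\mu(g\cdot X,\lambda)$. Moreover, by Remark~\ref{remark:stability-monomial-dependent} this value depends on $g$ only through the associated sets of monomials of $g\cdot X$ and $g\cdot H$; since there are finitely many subsets of $\Xi_d$ and of $\Xi_1$, only finitely many distinct affine functions of $t$ arise as $(g,\lambda)$ ranges over $G\times S_{n,d}$. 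Finally, Lemma~\ref{lemma:finiteness-lemma} tells us that $(X,H)$ is $t$-semistable exactly when $\mu_t(g\cdot(X,H),\lambda)\leq 0$ for all $g\in G$, $\lambda\in S_{n,d}$, and $t$-stable exactly when all these values are $<0$; so the set $\mathcal S$ of semistable $t$ coincides with the set defining $a$ and $b$.

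For (i), I would write $\mathcal S=\bigcap_{g,\lambda}\{t\in\mathbb Q_{\geq 0}:\mu_t(g\cdot(X,H),\lambda)\leq 0\}$. Each set in the intersection is the solution set of one affine inequality, hence a closed ray, all of $\mathbb Q_{\geq 0}$, or empty, and in every case closed and convex. Thus $\mathcal S$ is a closed convex subset of $\mathbb Q_{\geq 0}$, i.e. a closed interval. By the finiteness observation above the intersection is really finite, so when $\mathcal S\neq\emptyset$ its minimum $a$ and maximum $b$ are attained and rational (boundedness above comes from those $g$ for which $g\cdot\mathcal H$ forces a positive slope), giving $\mathcal S=[a,b]\cap\mathbb Q_{\geq 0}$.

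For (ii), suppose $(X,H)$ is $t^*$-stable for some $t^*$; then $t^*\in[a,b]$ by (i), and $\mu_{t^*}(g\cdot(X,H),\lambda)<0$ for every $(g,\lambda)$. Assume for contradiction that $(X,H)$ fails to be $t_0$-stable for some $t_0\in(a,b)$. Then some pair $(g_0,\lambda_0)$ satisfies $\mu_{t_0}(g_0\cdot(X,H),\lambda_0)\geq 0$, while $t_0$-semistability from (i) forces $\mu_{t_0}(g_0\cdot(X,H),\lambda_0)\leq 0$, hence $=0$. The affine function $\ell(t):=\mu_t(g_0\cdot(X,H),\lambda_0)$ therefore attains the value $0$ at the interior point $t_0$ of $[a,b]$ while satisfying $\ell\leq 0$ throughout $[a,b]$ (again by (i)). An affine function that attains its maximum over an interval at an interior point is constant, so $\ell\equiv 0$; in particular $\ell(t^*)=0$, contradicting $t^*$-stability. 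Hence $(X,H)$ is $t$-stable for all $t\in(a,b)$.

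The principal conceptual step is the reduction in the first paragraph: translating the quantifier ``for all $g\in G$'' into finitely many affine inequalities in $t$ by combining Lemma~\ref{lemma:finiteness-lemma} with the monomial-dependence of Remark~\ref{remark:stability-monomial-dependent}. Once that is secured, parts (i) and (ii) are purely formal consequences of the convexity and rigidity of affine functions, and the only remaining care is to verify that the endpoints $a,b$ are genuinely attained and that $t_0$ is interior, so that the rigidity argument applies.
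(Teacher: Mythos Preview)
Your proof is correct and follows essentially the same approach as the paper: both reduce, via Lemma~\ref{lemma:finiteness-lemma} and Remark~\ref{remark:stability-monomial-dependent}, to finitely many affine functions of $t$, then use convexity to obtain the interval in (i) and the rigidity of affine functions for (ii). Your argument for (ii) is in fact more explicit than the paper's (which simply asserts that the conclusion follows from affineness on a compact interval); conversely, the paper spells out the boundedness of $b$ more concretely---conjugating $H$ to $\{x_0=0\}$ and exhibiting a $\lambda\in S_{n,d}$ with $\mu(\{x_0=0\},\lambda)>0$---whereas you only allude to this parenthetically.
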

\begin{proof}
Recall that $S_{n,d}$ is a finite set, by Lemma \ref{lemma:finiteness-lemma}. Moreover, the pair $(X,H)$ is $t$-(semi)stable if and only if
$$ 
\mu_t(X,H)=\max_{\substack{\lambda_i\in S_{n,d}\\ g\in G}}\{\mu_t(g\cdot(X,H),\lambda_i)\} 
<  0 
\;\;  \left( \leq 0 \right)
$$
Notice that each of the functions $\mu_t(g\cdot (X,H),\lambda_i)$ is affine on $t$ and that there are only a finite number of such functions to consider in the definition of $\mu_t(X,H)$. Indeed, the last statement follows from observing that $\mu_t$ depends only of $\lambda_i$ (finite number of choices in $S_{n,d}$ to consider) and the monomials with non-zero coefficients in the polynomials defining $g\cdot (X,H)$. But there are only a finite number of such subsets of those monomials, since $\mathcal P(\Xi_d)\times \mathcal P(\Xi_1)$ is finite.

To see that $b<\infty$, observe that any hyperplane in $\mathbb P^{n+1}$ is conjugate by an element of $G$ to the hyperplane given by $\{x_0=0\}$. Let $r=(1,0,\ldots,0,-1)\in \mathbb Z^{n+2}$ and $\lambda=\diag(s^r)\in S_{n,d}$. Then $\mu(\{x_0=0\},\lambda)=1>0$. Hence, for $t\gg 0$, we have that $\mu_t(X,D)>0$ as each $\mu_t(g\cdot(X,D),\lambda)$ is piecewise affine. We conclude that if $(X,D)$ is not $t$-semistable for some $t\in \mathbb Q_{\geq 0}$, then 
$$[a,b]=\bigcap_{\substack{\lambda_i\in S_{n,d}\\g\in G}}\{t\ | \ \mu_t(g\cdot (X,H), \lambda_i)\leq 0\}$$
is a bounded interval, as it is an intersection of a finite number of intervals. This proves (i).

For (ii), notice that $(X,H)$ being $t$-stable for some  $t_0$ is equivalent to the functions $\mu_{t_0}(g\cdot (X,H),\lambda_i)$ being  always strictly negative. Then, the statement follows because  $\mu_{t}(g\cdot (X,H),\lambda_i)$ are affine functions, and $[a,b]$ is a compact interval.
\end{proof}

\section{Centroid Criterion}
\label{sec:centroid}

Lemma \ref{lemma:finiteness-lemma} allows us to detect the lack of $t$-stability of a $G$-orbit by having to consider only a finite number of one-parameter subgroups, precisely those in $S_{n,d}$. However, sometimes it is convenient to decide on the $t$-stability of a given pair $(X,H)$ without comparing to all the elements in $S_{n,d}$. For this purpose and to shorten the proof of Theorem \ref{theorem:wall-stratification}, we developed the Centroid Criterion, for which we need to introduce extra notation. Fix $t\in \mathbb Q_{\geqslant 0}$. We have a map $\mathrm{disc}_t\colon \Xi_d\times \Xi_1\rightarrow M\otimes \mathbb Q \cong \mathbb Q^{n+2}$, defined as
$$
\mathrm{disc}_t(x_0^{d_0}\cdots x_{n+1}^{d_{n+1}},x_j)
=(d_0,\ldots,d_{j-1}, d_j+t, d_{j+1} \dots d_{n+1}).$$
The image of $\mathrm{disc}_t$ is supported on the first quadrant of the hyperplane
$$H_{n,d,t}=\left\{(y_0,\ldots,y_{n+1})\in\mathbb Q^{n+2} \Bigg| \ \sum_{i=0}^{n+1} y_i=d+t\right\}.$$

We define the set $\mathrm{Conv}_t(X,H)$ as the convex hull of
$$\{\mathrm{disc}_t(v,b) \ | \ v\in \mathcal X,\ b=\min(\mathcal H)\}\subset H_{n,d,t},$$
where the minimum is for the Mukai order in $\Xi_1$, which is a total order (see Section \ref{sec:notation}). Observe that $\overline{\mathrm{Conv}_t(X,H)}$ is a convex polytope.

Given $t\in \mathbb Q_{\geqslant 0}$, we define the \emph{$t$-centroid} as
$$\mathcal O_t=\mathcal O_{n,d,t}=\left(\frac{d+t}{n+2},\ldots,\frac{d+t}{n+2}\right)\in H_{n,d,t}\subset\mathbb Q^{n+2}.$$ 

\begin{proof}[Proof of Lemma \ref{lemma:centroid-criterion}]
First we note that $(X,H)$ is $t$-semistable ($t$-stable, respectively) if and only if $(X,X\cap\{\min(\mathcal H)=0\})$ is $t$-semistable ($t$-stable, respectively). Indeed, let $x_k=\min(\mathcal H)$. Given any one-parameter subgroup $\lambda\in N$ we have
\begin{align*}
\mu_t((X,H),\lambda)&=\mu(\mathcal X,\lambda) + t\min_{b\in \mathcal H}\{\langle b,\lambda\rangle\}\\
&=\mu(\mathcal X,\lambda)+t\langle x_k,\lambda\rangle =\mu_t((X,X\cap \{x_k=0\}),\lambda).
\end{align*}
Hence, we may assume $D=X\cap \{x_k=0\}$. Suppose $\mathcal O_t\not\in \overline{\mathrm{Conv}_t(X,H)}$, then there is an affine function $\phi\colon \mathbb R^{n+2}\rightarrow \mathbb R$ such that $\phi\vert_{\overline{\mathrm{Conv}_t(X,H)}}$ is positive and $\phi(\mathcal O_t)=0$. In fact, since the vertices of $\mathrm{Conv}_t(X,H)$ have rational coefficients, we can choose $\phi$ to have integral coefficients. Write
$$\phi(y_0,\ldots,y_{n+1})=\sum_{i=0}^{n+1}a_iy_i + l$$
For $\mathrm{disc}_t(x^{d_0}\cdots x^{d_{n+1}},x_k)=(d_0,\ldots,d_k+t,\ldots,d_{n+1})\in \overline{\mathrm{Conv}_t(X,H)}$ we have
$$\sum_{i=0}^{n+1}a_id_i + ta_k + l>0,$$
and since $\phi(\mathcal O_t)=0$, we obtain $\frac{d+t}{n+2}\sum_{i=0}^{n+1}a_i+l=0$. Let $p=-\frac{l}{d+t}\in \mathbb Q$ and choose $m\in \mathbb Z_{\geqslant 0}$ such that $mp\in \mathbb Z$. Let
$$\lambda(s)=\left( \begin{array}{ccc}
s^{m(a_0-p)} & \cdots & 0 \\
\vdots & \ddots & \vdots\\
0 & \cdots & s^{m(a_{n+1}-p)}\end{array} \right)\in N.$$
Hence
\begin{align*}
\mu_t((X,H),\lambda)&=\min_{\prod_{i}x_i^{d_i}\in \mathcal X}\left\{\sum_{i=0}^{n+1}m(a_i-p)d_i\right\} + tm(a_k-p)\\
&=m\left(\min_{\prod_{i}x_i^{d_i}\in \mathcal X}\left\{\left(\sum_{i=0}^{n+1}a_id_i\right)+ta_k\right\}-p(d+t)\right)\\
&=\min_{v\in\overline{\mathrm{Conv}_t(X,H)}}\phi(v)>0.
\end{align*}
Hence $(X,H)$ is not $t$-semistable. We have shown that if $(X,H)$ is $t$-semistable, then $\mathcal O_t\in \overline{\mathrm{Conv}_t(X,H)}$. The proof of the statement when $(X,H)$ is $t$-stable is similar; in the above reasoning we only need to swap $\overline{\mathrm{Conv}_t(X,H)}$ by $\mathrm{Int}({\mathrm{Conv}_t(X,H)})$ and the strict inequalities by $\geqslant 0$.

Conversely, suppose that $(X,H)$ is not $t$-semistable. Then there is a normalized one-parameter subgroup $\lambda=\diag(s^{r_0},\ldots,s^{r_{n+1}})\in N$ with $\sum r_i=0$ and such that
\begin{align*}
0<\mu_t(X,H,\lambda)&=\min_{\prod_{i}x_i^{d_i}\in \mathcal X}\left\{\sum_{i=0}^{n+1}d_ir_i\right\} + tr_k\\
&=\min_{\prod_{i}x_i^{d_i}\in \mathcal X}\left\{r_0d_0+\cdots+r_k(d_k+t)+\cdots r_{n+1}d_{n+1}\right\}.
\end{align*}
Let $\phi(y_0,\ldots,y_{n+1})=\sum r_i y_i$.	By convexity, we have that $\phi\vert_{\overline{\mathrm{Conv}_t(X,H)}}>0$. On the other hand $\phi(\mathcal O_t)=\sum r_i\frac{d+t}{n+2}=\frac{d+t}{n+2}\sum r_i=0$. Hence $\mathcal O_t\not\in \overline{\mathrm{Conv}_t(X,H)}$. The proof for $t$-stability is similar.
\end{proof}

\begin{lemma}
\label{lemma:interval-stability2}
Let $(X,H)\in \mathcal R$. Suppose that its interval of semi-stability $[a,b]$ is not empty. Then
\begin{itemize}
	\item[(i)] $a=0$ if and only if $X$ is a GIT-semistable hypersurface of degree $d$.
	\item[(ii)] $b\leq t_{n,d}=\frac{d}{n+1}$ 
         \item[(iii)] The pair $(X,H)$ is $t_{n,d}$-semistable if and only if $X\cap H$ is a semistable hypersurface of degree $d$ in $H\cong \mathbb P^n$. 
\end{itemize}
\end{lemma}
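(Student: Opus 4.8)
The plan is to express each statement through the Hilbert--Mumford function $\mu_t(X,H,\lambda)=\mu(X,\lambda)+t\,\mu(H,\lambda)$, exploiting its affine dependence on $t$ (as in Corollary \ref{corollary:interval-stability}) together with the Centroid Criterion (Lemma \ref{lemma:centroid-criterion}). For (i) I would simply specialize to $t=0$: since $\mu_0(X,H,\lambda)=\mu(X,\lambda)$, the pair $(X,H)$ is $0$-semistable if and only if $\mu(X,\lambda)\leq 0$ for every non-trivial $\lambda$, which is exactly the condition that $X$ be a GIT-semistable hypersurface. By Corollary \ref{corollary:interval-stability} the set $[a,b]$ is precisely the set of $t$ at which $(X,H)$ is semistable, and $a$ is its minimum; since $a\geq 0$, we have $a=0$ if and only if $0\in[a,b]$, i.e.\ if and only if $(X,H)$ is $0$-semistable. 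Chaining the two equivalences gives (i).

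For (ii) I would exhibit a single destabilizing one-parameter subgroup valid for all $t>t_{n,d}$. Any hyperplane is $G$-conjugate to $\{x_0=0\}$ and pair-semistability is $G$-invariant, so I may assume $H=\{x_0=0\}$, whence $\mathcal H=\{x_0\}$ and $\mu(H,\lambda)=r_0$. Taking the normalized subgroup $\lambda^*=\diag(s^{n+1},s^{-1},\ldots,s^{-1})$, a direct computation gives $\langle I,\lambda^*\rangle=(n+2)d_0-d\geq -d$, so that $\mu(X,\lambda^*)\geq -d$ and hence $\mu_t(X,H,\lambda^*)=\mu(X,\lambda^*)+t(n+1)\geq -d+t(n+1)$, which is strictly positive as soon as $t>\frac{d}{n+1}=t_{n,d}$. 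Therefore $(X,H)$ is $t$-unstable for every $t>t_{n,d}$, forcing $b\leq t_{n,d}$. Equivalently, this can be read off the Centroid Criterion: the $\min(\mathcal H)$-coordinate of every vertex of $\overline{\mathrm{Conv}_t(X,H)}$ is $\geq t$, while that of $\mathcal O_t$ equals $\frac{d+t}{n+2}$, which is $<t$ precisely when $t>t_{n,d}$.

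For (iii) I would work exactly at the wall $t=t_{n,d}$ and pass through the Centroid Criterion. After conjugating so that $H=\{x_{n+1}=0\}$—which preserves the semistability of the pair and, via the induced projective isomorphism on $H\cong\mathbb P^n$, the semistability of $X\cap H$—we have $\min(\mathcal H)=x_{n+1}$, and since $\frac{d+t_{n,d}}{n+2}=t_{n,d}$ the centroid is $\mathcal O_{t_{n,d}}=(\frac{d}{n+1},\ldots,\frac{d}{n+1})$. Each vertex $\mathrm{disc}_{t_{n,d}}(v,x_{n+1})$ has last coordinate $d_{n+1}(v)+t_{n,d}\geq t_{n,d}$, with equality exactly for the monomials $v\in\mathcal X$ not involving $x_{n+1}$; as the centroid's last coordinate attains this minimum, $\mathcal O_{t_{n,d}}$ lies in $\overline{\mathrm{Conv}_{t_{n,d}}(X,H)}$ if and only if it lies in the face spanned by those monomials. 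Projecting off the last coordinate identifies this face with the convex hull of the exponent vectors of the monomials of $X\cap H$ in $\mathbb P^n$, and sends $\mathcal O_{t_{n,d}}$ to the hypersurface centroid $(\frac{d}{n+1},\ldots,\frac{d}{n+1})\in\mathbb Q^{n+1}$. By the Centroid Criterion for hypersurfaces in $\mathbb P^n$ (the $H$-free, one-lower-dimensional analogue of Lemma \ref{lemma:centroid-criterion}), this membership is equivalent to the GIT-semistability of $X\cap H$, which yields (iii).

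The main obstacle is this last step of (iii): making rigorous that, at the wall, the centroid is forced onto the face cut out by the monomials with $d_{n+1}=0$, and that this face encodes exactly the convex-hull data governing the stability of $X\cap H$ in $\mathbb P^n$. One must also dispose of the degenerate case in which $X\cap H$ is not a genuine degree-$d$ hypersurface, namely when $x_{n+1}\mid F$, i.e.\ $H\subset X$: then no monomial of $\mathcal X$ has $d_{n+1}=0$, the face is empty, $\mathcal O_{t_{n,d}}\notin\overline{\mathrm{Conv}_{t_{n,d}}(X,H)}$, and correspondingly $X\cap H$ fails to be semistable, so both sides of the equivalence fail consistently.
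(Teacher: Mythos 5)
Your proof is correct, and parts (i) and (ii) follow the paper essentially verbatim (same specialization at $t=0$; same destabilizing subgroup $\diag(s^{n+1},s^{-1},\ldots,s^{-1})$ with $\mu(X,\lambda)\geq -d$). For (iii) your route is genuinely different and, in one direction, cleaner. The paper splits (iii) into two contrapositives: for ``$X\cap H$ unstable $\Rightarrow$ pair unstable'' it first replaces $X$ by the Mukai-minimal representative $\tilde X=p_d+x_0x_{n+1}^{d-1}$ among all $X$ cutting out the same $Y_0$, and then argues that $\overline{\mathrm{Conv}_{t_{n,d}}(\tilde X,H)}$ is a pyramid over the face $V$ with apex off the hyperplane $P$, so the centroid (which lies in $P\setminus V$) is excluded; for the other direction it uses the face generated by the $d_i=0$ monomials and its projection. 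Your single observation --- that at the wall $t_{n,d}$ the centroid's $\min(\mathcal H)$-coordinate equals $\tfrac{d+t}{n+2}=t_{n,d}$, the \emph{minimum} of that coordinate over all vertices of the hull, so by convexity the centroid lies in the hull iff it lies in the face $\{y_{n+1}=t_{n,d}\}$ spanned by the monomials with $d_{n+1}=0$ --- handles both directions at once as a chain of equivalences, makes the reduction to $\tilde X$ and the pyramid unnecessary, and also explains part (ii) (for $t>t_{n,d}$ the centroid drops strictly below the supporting hyperplane). You also correctly dispose of the degenerate case $H\subset X$. The one point that deserves more care (in your write-up and, to be fair, in the paper's) is the quantification over coordinate systems hidden in the Centroid Criterion: a destabilizing one-parameter subgroup need not be diagonal in the coordinates you fixed with $H=\{x_{n+1}=0\}$, so the equivalences at the two ends of your chain require checking that every destabilizing coordinate system for the pair can be modified, via a triangular substitution sending the equation of $H$ to $\min(\mathcal H)$ (which does not decrease $\mu$), into one where $H$ is a coordinate hyperplane, and that such systems restrict to all coordinate systems on $H\cong\mathbb P^n$; this is exactly the step the paper compresses into ``by using the Mukai order \ldots we may assume $H=\{x_i=0\}$.''
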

\begin{proof}
The first statement  holds because the Hilbert-Mumford function at $t=0$ coincides with the Hilbert-Mumford function  for hypersurfaces, and the natural projection $\mathcal R\rightarrow \mathbb P^{n+1}$ is $G$-invariant.

For part (ii), suppose that $t>t_{n,d}$. Without loss of generality, we can suppose that the equations of any pair $(X,H)$ are given by 
\begin{align*}
X=\left( F(x_0,  \ldots, x_{n+1}) =0 \right),
& &
H=(x_0=0).
\end{align*}
Let $\lambda=(n+1, -1, \ldots, -1)$, then
$$
\mu_{t_{n,d}}((X,H), \lambda)>
-d+\frac{d}{n+1}(n+1) =0
$$
holds and $(X,H)$ is $t$-unstable. Therefore $b\leqslant t_{n,d}$. 

Next, we discuss (iii).  Suppose that $Y_0:=X \cap H$ is unstable.  We can select a coordinate system such that 
\begin{align*}
Y_0:= \left( p_d(x_1, \ldots ,x_{n+1}) =0 \right), \qquad H:=\{x_{0}=0\},
\end{align*}
and $Y_0$ is unstable in the coordinate system $\{x_1, \ldots, x_{n+1} \}$.  By using Mukai's order of monomials, we claim that 
among all possible pairs $(X,H)$ such that $Y_0=X \cap H$, the pair $(\tilde X,H)$ given by
\begin{align}\label{eq:mukaiMin}
\tilde X:=p_d(x_1, \ldots ,x_{n+1})+x_{n+1}^{d-1}x_0,
 &  &
H:=(x_0=0),
\end{align}
will minimize the Hilbert-Mumford function for any normalized one-parameter subgroup, because
$$
\mu(\tilde X, \lambda)
=\min\{
\mu(X, \lambda) \; | \; X  \cap H=Y_0
\}.
$$ 
Indeed, any other  $X$ with $X \cap H=Y_0$ differs from $\tilde X$ by a monomial involving the variable $x_0$.  Then, we observe that any other monomial divided by $x_0$  is greater than $x_0x_{n+1}^{d-1}$ in Mukai's order.
As a consequence if $\mu_{t_{n,d}}(\tilde X, H, \lambda) >0$, then any pair $(X,H)$ with $X \cap H=Y_0$ is $t_{n,d}$-unstable. Next, we use the Centroid Criterion (Lemma \ref{lemma:centroid-criterion}). 
By hypothesis, $Y_0$ is not a semistable hypersurface in $\mathbb{P}^n$. Then the convex hull of its monomials does not contain the point $\left( \frac{d}{n+1}, \ldots, \frac{d}{n+1} \right) \in \mathbb{R}^{n+1}$.  
If we consider the monomials of $Y_0$ as monomials  in $\mathbb{K}[x_0, \ldots, x_{n+1}]$, then the convex hull of the monomials of $\tilde{X}$ does not contain the point $\left(0, \frac{d}{n+1}, \ldots, \frac{d}{n+1} \right) \in \mathbb{R}^{n+2}$.
Notice  that this implies that $ \overline{\mathrm{Conv}_{t_{n,d}}(\tilde X,(x_0=0))}$   does not contain the point
$$\mathcal O_{n,d,t_{n,d}}=\left(0, \frac{d}{n+1}, \ldots, \frac{d}{n+1} \right)+ \frac{d}{n+1}(1, 0  , \ldots, 0),$$
and by the Centroid Criterion $(X,H)$ is $t_{n,d}$-unstable. To see the last assertion, notice that $ \overline{\mathrm{Conv}_{t_{n,d}}(\tilde X,(x_0=0))}$ is the convex hull of
$$V=\{\mathrm{disc}_{t_{n,d}}(m, x_0)\ | \ m \text{ is a monomial in }p_d\}\subset P\coloneqq\{y_0=t_{n,d}\}\subset\mathbb R^{n+2},
$$
and the point $q\coloneqq(1+t_{n,d},\ldots, 0,d-1)\not\in P$. Therefore $ \overline{\mathrm{Conv}_{t_{n,d}}(\tilde X,(x_0=0))}$ is a pyramid with base $V$ and vertex $q$. Since $\mathcal O_{t_{n,d}}\in P\setminus V$, the claim follows.

Next suppose that $(X,H)$ is unstable.  Then,  by the Centroid Criterion there is a coordinate system such that 
$ \overline{\mathrm{Conv}_t(X,H)}$ does not contain the centroid $\mathcal{O}_{n,d, \frac{d}{n+1}}$.  By using the Mukai order as in the previous case, we may assume $H=\{x_i=0\}$. Let
$$v:=\left(\frac{d}{n+1}, \ldots, \frac{d}{n+1}, 0, \frac{d}{n+1}, \ldots, \frac{d}{n+1} \right),$$
where the value $0$ corresponds to the $i$-th entry. Observe $v\not\in \overline{\mathrm{Conv}_0(X,H)}$, since otherwise
$$\mathcal O_{n,d,t_{n,d}}=\mathrm{disc}_{\frac{d}{n+1}}(v,x_i) \in  \overline{\mathrm{Conv}_{t_{n,d}}(X,H)}.$$
The monomials in the polynomial defining $X \cap (x_i=0)$ are precisely those monomials $m_j$ in the polynomial defining $X$ with exponents of the form
$$a_j=(d_0^j,\ldots, d_{i-1}^j, 0, d_{i+1}^j, \ldots d_{n+1}^j).$$
Those monomials correspond to the points generating a face $F$ of $\overline{\mathrm{Conv}_{t_{n,d}}(X,H)}$, namely the convex hull of points $(d_0^j,\ldots, d_{i-1}^j, t_{d,n}, d_{i+1}^j, \ldots d_{n+1}^j)$. The projection $F_P$ of $F$ onto the hyperplane $P=\{y_i=0\}\subset\mathbb R^{n+2}$ gives us that $v\not\in F_P$ since $F_P\subseteq\overline{\mathrm{Conv}_0(X,H)}$. But $F_P$ corresponds to $\overline{\mathrm{Conv}_0(X\cap H)} $ and $v=\mathcal O_{n-1,d,0}$, so by the Centroid Criterion $X\cap H$ is unstable.
 \end{proof}

\begin{proof}[{Proof of Theorem \ref{theorem:wall-stratification}}]
By Remark \ref{remark:stability-monomial-dependent} and the fact $\mathcal P(\Xi_k)\times \mathcal P(\Xi_1)$ is a finite set, there is a finite number of possible intervals of stability, say $[a_j,b_j]$. Hence $t_i\in \bigcup_j\{a_j,b_j\}$ and Lemma \ref{lemma:interval-stability2} implies that all $b_i\leqslant t_{n,d}$. Notice that given any wall $t_i$ there is at least a pair $(X,H)$ such that 
$$\mu_t(X,H)=\max_{\lambda\in S_{n,d}}\{\mu_t((X,H),\lambda)\}$$
satisfies $\mu_t(X,H)\leq0$ for $t\leq t_i$ and $\mu_t(X,H)>0$ for $t>t_i$. Hence $\mu_t(X,H)=0$ for $t=t_i$ since $\mu_t$ is continuous. The result follows from Lemma \ref{lemma:finiteness-lemma} and Remark \ref{remark:stability-monomial-dependent}.
\end{proof}

\begin{proof}[Proof of Corollary \ref{corollary:dimension-moduli}]
From \cite[Theorem 2.1]{orlik-solomon-finite-automorphism-hypersurface}, any hypersurface $X$ of degree $d\geqslant 3$ has $\dim(\mathrm{Aut}(X))=0$. Hence, for any log smooth pair $p=(X,D)\in \mathcal R$, its stabilizer $G_p$ satisfies
$$0\leq \dim(G_p)=\dim(G_X\cap G_D)\leq \dim( G_X)\leq \dim( \mathrm{Aut}(X))=0,$$
where the last equality follows from \cite[Theorem 2.1]{orlik-solomon-finite-automorphism-hypersurface}.
The result follows from the following identity (see \cite[Corollary 6.2]{dolgachev-lectures-git}):
\begin{align*}
\dim\left(\overline{M}^{GIT}_{n,d,t}\right)&=\dim(\mathcal H)-\dim(G)+\min_{p\in \mathcal H}{\dim G_p}=\\
&=\left({\binom{n+1+d}{d}}-1 + (n+1)\right)-\left((n+2)^2-1\right).
\end{align*}
We are left to proof the following claim: any pair $(X,D)$ such that $D\not\subset X$ and $X\cap D$ is smooth is $t$-semistable for all $t\in [0,t_{n,d}]$. Since smooth hypersurfaces are GIT semistable, the claim follows from Lemma \ref{lemma:interval-stability2}.
\end{proof}

\begin{proof}[Proof of Theorem \ref{theorem:CY-Fano}]
Suppose $n+2\geq d$ and $(X,H)$ is a pair such that $ \mathrm{Supp}(H) \subset \mathrm{Supp}(X)$. It suffices to show $(X,H)$ is $t$-unstable for all $t\geqslant 0$. We choose a coordinate system such that $H=(x_0=0)$ and $X$ is given as the zero locus of $F =x_0f_{d-1}(x_0,x_1, \ldots, x_{n+1})$. 
The monomial $x_0x^{d-1}_{n+1}$ is minimal for the Mukai order in the set $\mathcal X\cup \{x^{d-1}_{n+1}x_0\}$. Then for any  normalized one-parameter subgroup $\lambda=(s^{r_0}, \ldots, s^{r_{n+1}})$, 
\begin{align*}
\mu_{t}(X,H, \lambda) \geq \left( r_0+(d-1)r_{n+1} \right)+tr_0 
\end{align*}
holds. Since $d\leqslant n+2$, the one parameter subgroup $\lambda_0=\mathrm{Diag}(s^r)$ is normalized, where
$$
r=
\left(
n(d-1),  -(d-2), \ldots,  -(d-2), -n
\right).
$$
Then $\mu_{t}(X,H, \lambda_0)  \geq tn(d-1) > 0$ and $(X,H)$ is unstable for any $t>0$.
Now, if $X$ is a reducible Fano hypersurface, then $d\leqslant n+1$ and we may assume $F=x_0f_{d-1}(x_0,\ldots,x_{n+1})$. Let $\lambda=\diag(n+1,-1,\ldots,-1)$. By the previous step $H\neq \{x_0=0\}$ so 
$$\mu_t(X,D,\lambda)=n+1-(d-1)-t,$$
	and as $t\leqslant \frac{d}{n+1}$ and $d\leqslant n+1$, the result follows.
\end{proof}

\section{Families of $t$-unstable pairs}
\label{sec:maximal-destabilizing-subsets}

In this section we determine, for a given $t$, the set of monomials that characterize non-$t$-stable and $t$-unstable pairs.
\begin{definition}
Fix  $t\in [0, t_{n,d} ]$, and let $\lambda$ be a normalized one-parameter subgroup. A non empty pair of sets $A\subset  \Xi_d$ and $B\subset \Xi_1$ is a  \emph{maximal $t$-(semi)de\-stab\-i\-lized pair} $(A,B)$ with respect to $\lambda$ if the following conditions hold:
\begin{enumerate}[(i)]
\item Each pair $(v,m) \in A\times B$ satisfies $\langle v, \lambda\rangle + t\langle m , \lambda\rangle>0 $ ($\geqslant 0$, respectively).
\item If there is another pair of sets $\tilde A\subset \Xi_d$, $B\subset \Xi_1$ such that $A\subseteq \tilde A$, $B \subseteq \tilde B$ and for all $(v,m)\in \tilde A\times\tilde B$ the inequality $\langle v, \lambda\rangle + t\langle m , \lambda\rangle>0 $ ($\geqslant 0$, respectively) holds, then $\tilde A=A$ and $\tilde B = B$.
\end{enumerate}  
\end{definition}

\begin{lemma}\label{lemma:max-des-pairs}
Given a one-parameter subgroup $\lambda$ any maximal $t$-(semi)\-destabilized pair with respect to $\lambda$ can be written as
\begin{align*}
N^+_t(\lambda,x_i)&\coloneqq(V^+_t(\lambda, x_i),B^+(x_i))\\
\text{(respectively }N^\oplus_t(\lambda,x_i)&\coloneqq(V^\oplus_t(\lambda, x_i),B^\oplus(x_i))\text{)}
\end{align*}
where $x_i\in \Xi_1$ and
\begin{align*}
V^+_t(\lambda, x_i) &\coloneqq\{v \in \Xi_d \ | \ \langle v, \lambda\rangle + t\langle x_i,\lambda\rangle> 0\},\
B^+(x_i) \coloneqq\{m \in \Xi_1 \ | \ m\geq x_i\},\\
V^\oplus_t(\lambda, x_i) &\coloneqq\{v \in \Xi_d \ | \ \langle v, \lambda\rangle + t\langle x_i,\lambda\rangle\geq 0\},\
B^\oplus(x_i) \coloneqq\{m \in \Xi_1 \ | \ m\geq x_i\}.
\end{align*}
\end{lemma}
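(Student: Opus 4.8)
The plan is to show that a maximal pair $(A,B)$ is completely pinned down, through condition (i), by a single distinguished variable — the Mukai-minimal element of $B$ — and then to use the maximality condition (ii) to upgrade the resulting inclusions to equalities. I will carry out the $t$-destabilized case (the $+$ sets, with strict inequality); the semi-destabilized case (the $\oplus$ sets) is word-for-word identical with every $>$ replaced by $\geq$, since $B^{+}(x_i)=B^{\oplus}(x_i)$ and only the threshold in the $V$-set changes.

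First I would record the decisive observation that condition (i) factors. Because $A$ and $B$ are quantified independently and the expression is additive, requiring $\langle v,\lambda\rangle + t\langle m,\lambda\rangle>0$ for every $(v,m)\in A\times B$ is equivalent to
$$\min_{v\in A}\langle v,\lambda\rangle + t\min_{m\in B}\langle m,\lambda\rangle>0.$$
Since the Mukai order on $\Xi_1$ is total (Section \ref{sec:notation}), the nonempty set $B$ has a unique Mukai-minimal element, which I call $x_i$; Mukai-minimality gives $\langle x_i,\lambda\rangle\leq\langle m,\lambda\rangle$ for this particular $\lambda$, so $\min_{m\in B}\langle m,\lambda\rangle=\langle x_i,\lambda\rangle$. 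Substituting, condition (i) becomes $\langle v,\lambda\rangle + t\langle x_i,\lambda\rangle>0$ for every $v\in A$, i.e.\ $A\subseteq V^+_t(\lambda,x_i)$. Minimality of $x_i$ in $B$ also says $m\geq x_i$ for all $m\in B$, i.e.\ $B\subseteq B^+(x_i)$.

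Next I would verify that the candidate pair $(V^+_t(\lambda,x_i),B^+(x_i))$ itself satisfies condition (i). For $v\in V^+_t(\lambda,x_i)$ and $m\in B^+(x_i)$ we have $m\geq x_i$, hence $\langle m,\lambda\rangle\geq\langle x_i,\lambda\rangle$; multiplying by $t\geq 0$ and adding yields $\langle v,\lambda\rangle + t\langle m,\lambda\rangle\geq \langle v,\lambda\rangle + t\langle x_i,\lambda\rangle>0$. Thus $(V^+_t(\lambda,x_i),B^+(x_i))$ is an admissible pair containing $(A,B)$, and the maximality condition (ii) applied to $(A,B)$ forces the inclusions $A\subseteq V^+_t(\lambda,x_i)$ and $B\subseteq B^+(x_i)$ to be equalities. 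This is exactly the asserted form $N^+_t(\lambda,x_i)$.

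The argument is essentially mechanical once the factorization of (i) is in hand, so I do not expect a serious obstacle; the care lies in the bookkeeping. The two places to watch are the uses of $t\geq 0$: first, in identifying $\min_{m\in B}t\langle m,\lambda\rangle$ with $t\langle x_i,\lambda\rangle$ during the reduction, and second, in preserving the inequality when passing from $x_i$ to a larger $m$ under the Mukai order. One also needs $B$ nonempty for its Mukai-minimal element to exist, which holds because a $t$-(semi)destabilized pair is nonempty by definition. For the $\oplus$ version the same monotonicity remark applies: multiplying $\langle m,\lambda\rangle\geq\langle x_i,\lambda\rangle$ by $t\geq 0$ and adding to $\langle v,\lambda\rangle + t\langle x_i,\lambda\rangle\geq 0$ keeps the sum $\geq 0$, so the identical maximality step applies.
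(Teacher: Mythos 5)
Your proof is correct and follows essentially the same route as the paper's: set $x_i=\min(B)$ in the Mukai order, use $t\geq 0$ and minimality to reduce condition (i) to the single inequality $\langle v,\lambda\rangle+t\langle x_i,\lambda\rangle>0$ (resp.\ $\geq 0$), deduce $(A,B)\subseteq N^{+}_t(\lambda,x_i)$ (resp.\ $N^{\oplus}_t(\lambda,x_i)$), and invoke maximality. You are slightly more careful than the paper in explicitly verifying that the candidate pair itself satisfies condition (i) before applying (ii), which is a welcome clarification rather than a deviation.
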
 
\begin{proof}
Let $(A,B)$ be a maximal  $t$-semidestabilized pair with respect to $\lambda$.
Let $x_i:= \min(B)$. By Mukai's order,
\begin{align*}
\langle v, \lambda \rangle + t\langle m, \lambda\rangle
\geq
\langle v, \lambda \rangle + t\langle x_i , \lambda\rangle\geq 0
, & & \text{ for all } (v,m) \in (A,B).
\end{align*}
Then $\left( A, B \right) \subseteq N^\oplus_t(\lambda,x_i)$ and the maximality condition implies  $(A,B) = N^\oplus_t(\lambda,x_i)$. In particular, this proves that $N^\oplus_t(\lambda,x_i)$ is a maximal $t$-semi\-de\-stab\-i\-lized pair with respect to $\lambda$. The proof for maximal $t$-destabilized pairs is similar, exchanging the inequalities for strict inequalities.
\end{proof}
\begin{proof}[{Proof of Theorem \ref{theorem:main}}]
Suppose   $(X,H)$ is a $t$-unstable pair (a not $t$-stable pair, respectively). By Lemma \ref{lemma:finiteness-lemma} there is $g \in G$ and $\lambda\in S_{n,d}$ such that: 
\begin{align*}
\mu_t(g\cdot X,g \cdot H),\lambda) >0 \text{ (}\geq 0\text{, respectively)}.
\end{align*}
Then, every $(v,m) \in (g \cdot \mathcal X, g \cdot \mathcal H)$ satisfies 
$\langle v, 
\lambda\rangle + t\langle m , \lambda\rangle>0 $ ($\geqslant 0$, respectively).
By the definition of maximal $t$-(semi)stable pairs 
and Lemma \ref{lemma:max-des-pairs},  $g\cdot \mathcal X \subseteq V_t^+(\lambda,x_i)$ and $g\cdot \mathcal H\subseteq B^+(x_i)$ ($g\cdot \mathcal X \subseteq V_t^\oplus(\lambda,x_i)$ and $g\cdot \mathcal H\subseteq B^\oplus(x_i)$, respectively) hold for some $\lambda\in S_{n,d}$ and some $x_i\in \Xi_1$. Choosing the maximal pairs of sets $N_t^\oplus(\lambda, x_i)$ under the containment order where $\lambda\in S_{n,d}$ and $x_i\in \Xi_1$, we obtain families of pairs whose coefficients belong to maximal $t$-(semi)destabilized sets.
\end{proof}

\begin{proposition}
\label{proposition:cartesian-product}
Let $t\in (0, t_{n,d})$. If the set 
\begin{align*}
 \mathrm{Ann}_t \left(\lambda,x_i \right)
=
\{ (v,m) \in V^\oplus_t(\lambda,x_i) \times B^\oplus(x_i)\; | \; \langle v, \lambda \rangle + t\langle m, \lambda\rangle =0 \}
\end{align*}
is not empty, then it is equal to the cartesian product 
$ 
V_t^0 \left(\lambda,x_i \right) \times   B^0 \left(\lambda,x_i \right)
$
where
\begin{align*}
V_t^0 \left(\lambda,x_i \right) &= \{ v \in V^\oplus_t(\lambda, x_i) \; | \exists m' \in B^\oplus(x_i)
\text{ such that }
\langle v, \lambda \rangle + t\langle m', \lambda\rangle =0 \},
\\
B^0 \left(\lambda,x_i \right) &=
\{ m \in B^\oplus(x_i) \; | \;  
\langle \overline m, \lambda \rangle  \geq \langle m, \lambda\rangle
\text{ for all }   \overline m \in B^\oplus(x_i) \}.
\end{align*}
We call $\mathrm{Ann}_t\left(\lambda,x_i \right)$ the \emph{Annihilator of $\lambda$ and $x_i$ at $t$}.
\end{proposition}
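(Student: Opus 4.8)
The plan is to prove the set equality $\mathrm{Ann}_t(\lambda,x_i)=V_t^0(\lambda,x_i)\times B^0(\lambda,x_i)$ by a double inclusion, the essential point being a \emph{decoupling} observation: any pair lying in the annihilator is forced to have its $\Xi_1$-coordinate minimize the pairing $\langle\,\cdot\,,\lambda\rangle$ over $B^\oplus(x_i)$, after which the annihilating condition $\langle v,\lambda\rangle+t\langle m,\lambda\rangle=0$ separates into a condition on the $\Xi_d$-coordinate alone and a condition on the $\Xi_1$-coordinate alone. First I would record the combinatorial description of the relevant sets. Since the Mukai order restricted to $\Xi_1$ is total with $x_0\geqslant\cdots\geqslant x_{n+1}$, one has $B^\oplus(x_i)=\{x_0,\ldots,x_i\}$, and because $\lambda$ is normalized (so $r_0\geqslant\cdots\geqslant r_{n+1}$) the minimum of $\langle\,\cdot\,,\lambda\rangle$ over $B^\oplus(x_i)$ equals $\langle x_i,\lambda\rangle=r_i$ and is attained at $x_i$. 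Thus $B^0(\lambda,x_i)$ is exactly the set of minimizers of $\langle\,\cdot\,,\lambda\rangle$ on $B^\oplus(x_i)$, and every $m\in B^0(\lambda,x_i)$ satisfies $\langle m,\lambda\rangle=r_i$.

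For the inclusion $\mathrm{Ann}_t\subseteq V_t^0\times B^0$, take $(v,m)\in\mathrm{Ann}_t$. That $v\in V_t^0$ is immediate upon taking $m'=m$ as the required witness. To see $m\in B^0$, I would use that $m\in B^\oplus(x_i)$ forces $\langle m,\lambda\rangle\geqslant\langle x_i,\lambda\rangle$ (a Mukai-order comparison for the single subgroup $\lambda$), so that, since $t>0$,
\begin{align*}
0=\langle v,\lambda\rangle+t\langle m,\lambda\rangle\geqslant\langle v,\lambda\rangle+t\langle x_i,\lambda\rangle\geqslant 0,
\end{align*}
where the last inequality holds because $v\in V^\oplus_t(\lambda,x_i)$. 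Both inequalities are therefore equalities; cancelling $t>0$ yields $\langle m,\lambda\rangle=\langle x_i,\lambda\rangle=r_i$, so $m$ is a minimizer and $m\in B^0$. The same computation pins down $V_t^0(\lambda,x_i)=\{v\in V^\oplus_t(\lambda,x_i)\mid\langle v,\lambda\rangle=-tr_i\}$.

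For the reverse inclusion $V_t^0\times B^0\subseteq\mathrm{Ann}_t$ I would exploit this decoupling. Given $v\in V_t^0$ there is a witness $m'\in B^\oplus(x_i)$ with $\langle v,\lambda\rangle+t\langle m',\lambda\rangle=0$; applying the previous paragraph to $(v,m')$ gives $\langle m',\lambda\rangle=r_i$, hence $\langle v,\lambda\rangle=-tr_i$. Now for any $m\in B^0$ we likewise have $\langle m,\lambda\rangle=r_i$, so $\langle v,\lambda\rangle+t\langle m,\lambda\rangle=-tr_i+tr_i=0$ and $(v,m)\in\mathrm{Ann}_t$. The nonemptiness hypothesis enters precisely here to guarantee the content is nonvacuous: if $\mathrm{Ann}_t$ is empty then $V_t^0$ is empty and the asserted product is empty as well.

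I expect the main (though modest) obstacle to be the bookkeeping that makes the separation rigorous, namely verifying that $\langle\,\cdot\,,\lambda\rangle$ is constant on $B^0$ and that this constant equals the value forced on the $\Xi_1$-coordinate of every annihilator element. Everything hinges on the sandwich inequality above together with $t>0$ (to cancel the factor $t$); the upper bound $t<t_{n,d}$ is the standing assumption of the section and plays no further role in this argument.
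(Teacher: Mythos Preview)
Your proof is correct and follows essentially the same double-inclusion strategy as the paper, relying on the same sandwich inequality and the hypothesis $t>0$. The only difference is stylistic: you make the minimum value $\langle x_i,\lambda\rangle=r_i$ explicit and compute with it, whereas the paper argues the first inclusion by contradiction (if $m$ were not a minimizer, some $\overline m\in B^\oplus(x_i)$ would give $\langle v,\lambda\rangle+t\langle\overline m,\lambda\rangle<0$, contradicting membership in $N^\oplus_t$) and handles the reverse inclusion by a direct sandwich $0=\langle v,\lambda\rangle+t\langle m',\lambda\rangle\geqslant\langle v,\lambda\rangle+t\langle m,\lambda\rangle\geqslant 0$ without naming $r_i$.
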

\begin{proof}
Let $(v,m) \in \mathrm{Ann}_t \left(\lambda,x_i \right)$. Then
$v \in V^0_t \left(\lambda,x_i \right)$. Suppose there is $\overline m
\in B^\oplus(x_i)$ such
that 
$\langle m, \lambda \rangle  >  \langle \overline m, \lambda\rangle$.  Then, since $t>0$
\begin{align*}
0=\langle v, \lambda \rangle + t\langle m, \lambda\rangle  >
\langle v, \lambda \rangle + t\langle \overline m, \lambda\rangle, 
\end{align*}
which contradicts the fact that $(v,\overline m) \in N^\oplus_t(\lambda,x_i)$. Therefore $m\in B^0(\lambda, x_i)$.

Let $(v,m) \in V^0_t \left(\lambda,x_i \right) \times   B^0 \left(\lambda,x_i \right)$. Then there is $m' \in  B^\oplus(x_i)$ such that $\langle v, \lambda \rangle + t\langle m', \lambda\rangle  =0$. Since $m\in B^0 \left(\lambda,x_i \right)$, then $\langle  m', \lambda \rangle  \geq \langle m, \lambda\rangle$. 
Therefore, we have that 
$$
0=\langle v, \lambda \rangle + t\langle m', \lambda\rangle   \geqslant 
\langle v, \lambda \rangle + t\langle m, \lambda\rangle \geqslant 0,
$$
because  $(v,m) \in N^\oplus_t(\lambda,x_i)$.  This implies that $(v,m) \in \mathrm{Ann}_t(\lambda,x_i)$.
\end{proof}

\begin{proof}[Proof of Theorem \ref{theorem:closed-orbits}]
Let $p=(X,H)$. By \cite[Remark 8.1 (5)]{dolgachev-lectures-git}, since $p=(X,H)$ is strictly $t$-semistable and represents a closed orbit, then the stabilizer subgroup $G_p\subset G=\SL(n+2,\mathbb K)$ is infinite. This implies there is a one-parameter subgroup $\lambda \in G_p$. In particular $\lim_{s\rightarrow 0} \lambda(s)\cdot (X,H)=(X,H)$. This implies $\mu_t(X,H,\lambda)=0$. By choosing an appropriate coordinate system and applying Lemma \ref{lemma:finiteness-lemma} we may assume that $\lambda\in S_{n,d}$ and $(\mathcal X, \mathcal D)=(V^0_t(\lambda, x_i),B^0(\lambda, x_i))$. Indeed, the latter follows from Proposition \ref{proposition:cartesian-product}.
\end{proof}

\section{A method to study stability}
\label{section:algorithm}
The following method can be extended to a full algorithm to describe $M^{GIT}_{n,d,t}$ \cite{gallardo-jmg-code}:
\begin{enumerate}[1.]
	\item By Theorem \ref{theorem:wall-stratification}, the interval of stability of any pair $(X,H)$ for \emph{any} polarization $t\in \mathbb Q_{\geqslant 0}$ is determined by a finite set of one-parameter subgroups $S_{n,d}$ which can be computed using Definition \ref{definition:set-S}. 
	\item The walls $t_0,\ldots,t_{n,d}$ are among those in \eqref{eq:walls}.
	\item For each wall $t=t_i$ or for any $t\in (t_i,t_{i+1})$ we may compute the sets of monomials $N^{\oplus}_t(\lambda, x_j)$ for each $\lambda \in S_{n,d}$ and $0\leqslant j\leqslant n+1$ and choose the \emph{maximal} among them. By Theorem \ref{theorem:main}, these correspond to families in $R_{n,d}$ of non $t$-stable pairs. Each non $t$-stable pair corresponds to one of these families. Then, the Centroid Criterion (Lemma \ref{lemma:centroid-criterion}) distinguishes for which of these families the general element is strictly $t$-semistable or $t$-unstable. 
	\item For each family which is strictly $t$-semistable, we consider the set 
$$
(V^0_t(\lambda,x_i),B^0(\lambda,x_i)) 
$$ 
of each maximal $ N^\oplus_t(\lambda, x_j) $. Any strictly $t$-semistable closed orbit must belong to families whose defining equations have monomials in  this set. (Theorem \ref{theorem:closed-orbits}). These are also called $t$-polystable orbits which are not $t$-stable.
	\item To determine the $t$-stable orbits geometrically, we classify the families given by $N^\oplus_t(\lambda, x_j)$ according to their singularities. This requires an understanding of the singularities of $(X,H)$ for given $n$ and $d$ as well as their deformations. See \cite{gallardo-jmg-vgit-cubic-surfaces} for the case of cubic surfaces.
\end{enumerate}
\bibliographystyle{amsplain}
\bibliography{bibliography} 	

\end{document}